\theoremstyle{plain}
\newtheorem{theorem}{Theorem}[section]
\newtheorem{proposition}[theorem]{Proposition}
\theoremstyle{definition}
\newtheorem{remark}[theorem]{Remark}
\newcommand{\bQ}{{\mathbb Q}}
\newcommand{\bZ}{{\mathbb Z}}
\newcommand{\cH}{{\mathcal H}}
\newcommand{\bC}{{\mathbb C}}
\begin{document}

\title{On some special classes of complex elliptic curves}
\author{ Bogdan Canepa }
\address{"Ovidius" University \\
124 Mamaia Blvd., 900527 Constanta, Romania } \email[Bogdan
Canepa]{bogdan\_canepa@yahoo.com}
\author{Radu Gaba}
\address{Institute of Mathematics "Simion
Stoilow" of the Romanian Academy\\ P.O. BOX 1-764 RO-014700
Bucharest, ROMANIA} \email[Radu Gaba]{radu.gaba@imar.ro}

\date{October 30, 2011}
\begin{abstract}
In this paper we classify the complex elliptic curves $E$  for
which there exist cyclic subgroups $C\leq (E,+)$ of order $n$ such
that the elliptic curves $E$ and $E/C$ are isomorphic, where $n$
is a positive integer. Important examples are provided in the last
section. Moreover, we answer the following question: given a
complex elliptic curve E, when can one find a cyclic subgroup  $C$
of order $n$ of $(E,+)$ such that
$(E,C)\sim(\frac{E}{C},\frac{E[n]}{C})$, $E[n]$ being the
$n$-torsion subgroup of $E$, classifying in this way the fixed
points of the action of the Fricke involution on
  the open modular curves $Y_0(n)$.\\

  \emph{Mathematics subject classification: Primary: ${\rm 11G07, 11G15}$, Secondary: ${\rm 14D22}$}

  \emph{Key words:} \text {elliptic curve, Fricke involution,
  modular curve}
\end{abstract}
\maketitle

\section{Introduction}

 Let $E$ be an elliptic curve defined over the field of complex numbers $\bC$ and $C$ be a subgroup (not necessarily cyclic) of
 order $n<\infty$ of $(E,+)$. This means that $C$ is a subgroup of order $n$
 of $E[n]$ where by $E[n]$ one denotes the $n$-torsion subgroup of $E$  i.e. the set of points of order $n$ in $E$: $E[n]=\{P\in E :
 [n]P=O\}$. Let also $\pi:E\rightarrow E/C$ be the natural projection. Since $C$ acts
  effectively and properly discontinuous on $E$, the group $E/C$ has a structure of Riemann
  variety, compatible with the morphism $\pi$ and moreover $\pi$ is unramified of degree $n$: ${\rm
  deg}\pi = |\pi^{-1}(O)| = |C| = n$. It is known that $E/C$ is a complex elliptic curve and that
 if $C$ is cyclic, one has $E[n]/C\cong \bZ/n\bZ$.

  Denote by $\cH$ the upper half plane i.e. $\{z\in \bC | \hspace{1mm} {\rm Im}(z) >
  0\}$. One defines the open modular curves $Y_0(n)$ as the quotient space $\Gamma_0(n)/ \cH$ that is
   the set of orbits $\{\Gamma_0(n)\tau : \tau \in \cH\}$, where
  $\Gamma_0(n)$ is the "Nebentypus" congruence subgroup of level
  $n$ of $SL_2(\bZ)$, acting on $\cH$ from the left:

  \begin{center}
   $\Gamma_0(n)=\{\left(\begin{array}{cc}
a&b\\c&d\end{array}\right)\in
   \mathbb{SL}_2(\mathbb{Z})|\hspace{1 mm} c\equiv 0 ({\rm mod} n)\}$.
  \end{center}
Following the notations of \cite{DS}, an enhanced elliptic curve
for $\Gamma_0(n)$ is by definition an ordered pair $(E,C)$ where
$E$ is a complex elliptic curve and  $C$ a cyclic subgroup of $E$
of order $n$. Two pairs $(E,C)$ and $(E',C')$ are said to be
equivalent if some isomorphism $E\cong E'$ takes $C$ to $C'$. One
denotes the set of equivalence classes with
\begin{center}
$S_0(n):=\{$enhanced elliptic curves for $\Gamma_0(n)\}/\sim$.
\end{center}
Furthermore, an element of $S_0(n)$ is an equivalence class
$[E,C]$. $S_0(n)$ is a moduli space of isomorphism classes of
complex elliptic curves and $n$-torsion data.

Denote now by $\Lambda_{\tau}$ the lattice $\bZ+\bZ \tau$, $\tau
\in \cH$ and by $E_{\tau}$ the elliptic curve
$\bC/\Lambda_{\tau}$. Then one has the following bijection:
\begin{center}
$S_0(n)\cong Y_0(n)$ given by $[\bC/\Lambda_{\tau}, \langle 1/n +
\Lambda_{\tau} \rangle]\mapsto \Gamma_0(n) \tau$  (see \cite{DS},
Theorem 1.5.1 for details.)
\end{center}
  In this paper we study the complex elliptic curves $E$ for which there exist
cyclic subgroups $C\leq (E,+)$ of order $n$ such that the elliptic
curves $E$ and $E/C$ are isomorphic, where $n$ is a positive
integer. We observe that $E$ (and consequently $E/C$) are CM
curves, obviously have isomorphic endomorphism rings and hence
these points $(E,C)$ with $E\cong E/C$ are a special class of
Heegner points on the modular curve $X_0(n)=Y_0(n)\cup \{ {\rm
cusps}\}$. Very nice examples are provided in the last section for
the cases $n=2,3,5$ (and we remark that for $p$ prime there are
exactly $p+1$ complex elliptic curves $E$ (up to an isomorphism)
which admit at least one subgroup $C\leq (E,+)$ of order $p$ such
that $\frac{E}{C}\simeq E$). We recall that the Heegner points of
$Y_0(n)$ as defined by Birch in \cite{B1} are pairs $(E,E')$ of
$n$-isogenous curves with the same endomorphism ring. Heegner was
the one introducing them in \cite{He} while working on the class
number problem for imaginary quadratic fields and their importance
is extensively described in the survey \cite{B2}.

After elementarily studying the above mentioned class of Heegner
points, upon imposing certain conditions (see Theorem
\ref{theorem:2} and Proposition \ref{proposition:0}), given a
complex elliptic curve $E$, we answer the question of when there
exists $C\leq(E,+)$ cyclic subgroup of order $n$ of $E$ such that
 $(E,C)\sim(\frac{E}{C},\frac{E[n]}{C})$, studying in this way the
fixed points of the action of the Fricke involution
\begin{center}
  $w_n:=\left(\begin{array}{cc}
0&-1\\n&0\end{array}\right) \in GL_2(\bQ^{+})$
  \end{center} on the open modular curves $Y_0(n)$.

 The number of fixed points of the Fricke involution on $Y_0(n)$ was computed by
 Ogg (see \cite{O}, Proposition 3) and Kenku (see \cite{K},
 Theorem 2) and  this number, for $n>3$, is  $\nu(n)=h(-n)+h(-4n)$ if $n\equiv 3 ({\rm
 mod}4)$ and $\nu(n)=h(-4n)$ otherwise, where $h(-n)$ is the class number of primitive quadratic forms of discriminant $-n$ and $\nu(2)=\nu(3)=2$.

   It is easy to
  see that $\sigma$ normalizes the group $\Gamma_0(n)$ and hence
  gives an automorphism $\Gamma_0(n)z\mapsto \Gamma_0(n) \sigma
  (z)$ on $Y_0(n)$. It is also easy to check that this
  automorphism is an involution. The following proposition is a known result which we will use throughout the paper and we  skip
  its proof (the reader may consult \cite{DR}, section IV), 4.4 or \cite{Hu}, Theorem 2.4 and Remark 5.5 for details):

  \begin{proposition}
The action of  $w_n$ on the moduli space $S_0(n)$ is given by:
$[E,C]\mapsto [E/C,E[n]/C]$.

  \end{proposition}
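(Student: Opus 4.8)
The plan is to compute everything explicitly through the standard complex uniformization, using the bijection $S_0(n)\cong Y_0(n)$ recalled above. By construction the (transported) action of $w_n$ on $S_0(n)$ is the one corresponding to the M\"obius action $\Gamma_0(n)\tau\mapsto \Gamma_0(n)(w_n\tau)$ on $Y_0(n)$, where $w_n\tau=\frac{0\cdot\tau-1}{n\tau+0}=-\frac{1}{n\tau}$. So it is enough to prove the following assertion: for every $\tau\in\cH$, if $E=\bC/\Lambda_\tau$ and $C=\langle 1/n+\Lambda_\tau\rangle$, then the enhanced elliptic curve $(E/C,E[n]/C)$ is equivalent to $(\bC/\Lambda_{\tau'},\langle 1/n+\Lambda_{\tau'}\rangle)$ with $\tau'=-1/(n\tau)$. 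Since every element of $S_0(n)$ admits a representative of the form $[\bC/\Lambda_\tau,\langle 1/n+\Lambda_\tau\rangle]$, this will yield the proposition.

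First I would describe $E/C$ and its $n$-torsion data concretely. The subgroup $C$ is the image of $\tfrac1n\bZ$ in $\bC/\Lambda_\tau$, so $E/C=\bC/\Lambda'$ with $\Lambda'=\tfrac1n\bZ+\bZ\tau$, the quotient map $\pi$ being induced by ${\rm id}_\bC$. On the other hand $E[n]=\tfrac1n\Lambda_\tau/\Lambda_\tau=\langle 1/n+\Lambda_\tau,\ \tau/n+\Lambda_\tau\rangle$, and passing to the quotient by $C$ kills the first generator; hence $E[n]/C=\langle \tau/n+\Lambda'\rangle$, and one checks at once that $\tau/n+\Lambda'$ has exact order $n$ in $\bC/\Lambda'$, in agreement with $E[n]/C\cong\bZ/n\bZ$ for $C$ cyclic.

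Next I would put $(E/C,E[n]/C)$ into standard form by a homothety. Multiplication by $\alpha=-1/\tau$ sends $\Lambda'=\tfrac1n\bZ+\bZ\tau$ to $-\tfrac{1}{n\tau}\bZ+\bZ=\bZ+\bZ\tau'$ with $\tau'=-1/(n\tau)$, and $\tau'\in\cH$ since ${\rm Im}\bigl(-1/(n\tau)\bigr)={\rm Im}(\tau)/(n|\tau|^2)>0$; thus $z\mapsto\alpha z$ induces an isomorphism $E/C=\bC/\Lambda'\xrightarrow{\ \sim\ }\bC/\Lambda_{\tau'}$. Under it the distinguished torsion point $\tau/n+\Lambda'$ is carried to $\alpha\cdot(\tau/n)+\Lambda_{\tau'}=-1/n+\Lambda_{\tau'}$, which (as $\gcd(n-1,n)=1$) generates the same cyclic subgroup as $1/n+\Lambda_{\tau'}$. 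Therefore $(E/C,E[n]/C)\sim(\bC/\Lambda_{\tau'},\langle 1/n+\Lambda_{\tau'}\rangle)$, so under the bijection $[E/C,E[n]/C]$ corresponds to $\Gamma_0(n)\tau'=\Gamma_0(n)(w_n\tau)$, which is exactly the image of $[E,C]\leftrightarrow\Gamma_0(n)\tau$ under $w_n$. This proves the claim.

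The computations are elementary; the only points where care is needed are (i) identifying $E[n]/C$ as precisely $\langle\tau/n+\Lambda'\rangle$ rather than as some other order-$n$ subgroup of $(\bC/\Lambda')[n]$, and (ii) choosing the homothety with the correct sign, $\alpha=-1/\tau$ and not $\alpha=1/\tau$, so that the resulting $\tau'$ lands in $\cH$ and matches the M\"obius image $w_n\tau$. Well-definedness of the construction — independence of the chosen representative $\tau$ in its $\Gamma_0(n)$-orbit and of the chosen generator of $C$ — requires no separate argument, since it is already encoded in the bijection $S_0(n)\cong Y_0(n)$.
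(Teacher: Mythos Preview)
Your proof is correct. Note, however, that the paper does not actually prove this proposition: the sentence immediately preceding it reads ``The following proposition is a known result which we will use throughout the paper and we skip its proof (the reader may consult [DR], section IV, 4.4 or [Hu], Theorem 2.4 and Remark 5.5 for details).'' So there is no proof in the paper to compare against; you have supplied what the authors deliberately omitted. Your explicit computation via the uniformization $S_0(n)\cong Y_0(n)$ --- identifying $E/C$ with $\bC/(\tfrac{1}{n}\bZ+\bZ\tau)$, reading off $E[n]/C=\langle\tau/n\rangle$, and normalizing by the homothety $z\mapsto -z/\tau$ to land on $\Lambda_{-1/(n\tau)}$ --- is the standard argument (essentially what one finds in Diamond--Shurman), and all the steps check out.
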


\section{Main results}

With the background from the previous section we are ready to
classify the complex elliptic curves $E$ for which there exist
cyclic subgroups $C\leq (E,+)$ of order $n$ such that the elliptic
curves $E$ and $E/C$ are isomorphic:

\begin{theorem}\label{theorem:1}

   Let $E$ be a complex elliptic curve determined by the lattice $\langle 1,\tau\rangle$, $\tau\in \cH$ . Then:\\
  \indent i) $\exists C\leq (E,+)$ finite cyclic subgroup such that $\frac{E}{C}\simeq E\Leftrightarrow\exists u,v\in\mathbb{Q}$
  such that $\tau^{2}=u\tau+v$ with $\Delta=u^{2}+4v<0$ (i.e. $E$ admits complex multiplication);\\

  \indent ii) If $\tau$ satisfies the conditions of i) and $u=\frac{u_{1}}{u_{2}}, v=\frac{v_{1}}{v_{2}},u_{2}\neq 0,v_{2}\neq 0,
   u_{1}, u_{2}, v_{1}, v_{2}\in\mathbb{Z}, (u_{1},u_{2})=(v_{1},v_{2})=1, d_{2}=(u_{2},v_{2})$, then:\\

    $\exists C\leq (E,+)$ cyclic subgroup of order $n$ which satisfies $\frac{E}{C}\simeq E \Longleftrightarrow
     \exists (a,b')\in\mathbb{Z}^{2}$ with $(a,b')=1$ such that $n={\rm det}
     M$, where $M$ is the matrix
     \begin{equation*}
M=\left(
\begin{array}{ccc}
a & A  \\
b & B \\
\end{array} \right)
 \end{equation*}

and  $(a,A,b,B)=\Big(a,\frac{u_{2}v_{1}}{d_{2}}b',\frac{u_{2}v_{2}}{d_{2}}b', a+\frac{u_{1}v_{2}}{d_{2}}b'\Big)$;\\

 \indent iii) The subgroup $C$ from ii) is C=$\langle\frac{u_{11}+u_{21}\tau}{n}\rangle$, where $u_{11},u_{21}$ are obtained in the following way:\\

 Since ${\rm det} M = n$ and ${\rm gcd}(a,A,b,B)=1$ (one deduces easily this), the matrix $M$
 is arithmetically equivalent with the matrix:
\begin{equation*}
M\sim\left(
\begin{array}{ccc}
1 & 0  \\
0 & n \\
\end{array} \right),
\end{equation*}
hence
\begin{equation*}
 \exists U,V\in GL_{2}(\mathbb{Z})\quad \text{such that}\quad M=U\cdot\left(
\begin{array}{ccc}
1 & 0  \\
0 & n \\
\end{array} \right)\cdot V.
\end{equation*}
The elements $u_{11},u_{21}$ are the first column of the matrix
\begin{equation*}
U=\left(
\begin{array}{ccc}
u_{11} & u_{12}  \\
u_{21} & u_{22} \\
\end{array} \right).
 \end{equation*}\\
\end{theorem}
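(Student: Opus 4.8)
The plan is to write the subgroup $C$ down explicitly from the Smith normal form data and then verify its two required properties — that it is cyclic of order $n$, and that $E/C\simeq E$ — by a short computation with the lattice $\Lambda=\langle 1,\tau\rangle$, which I identify with $\mathbb{Z}^{2}$ via the basis $(1,\tau)$, so that $E=\mathbb{C}/\Lambda$ and $E[n]=\tfrac1n\Lambda/\Lambda$.

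First I would observe, using part i) (that is, $\tau^{2}=u\tau+v$), that the matrix $M$ of part ii) is exactly the matrix, in the basis $(1,\tau)$, of multiplication by $\mu:=a+b\tau$ on $\Lambda$, where $b=\tfrac{u_{2}v_{2}}{d_{2}}b'$: indeed $\mu\cdot 1=a+b\tau$ and $\mu\cdot\tau=bv+(a+bu)\tau$, and a one-line substitution turns $(bv,\,a+bu)$ into $(A,B)$. Consequently $\mu\Lambda\subseteq\Lambda$, in coordinates $\mu\Lambda=M\mathbb{Z}^{2}$, and $\det M=N_{\mathbb{Q}(\tau)/\mathbb{Q}}(\mu)=n$ (positive since $\Delta<0$). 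Because $\gcd(a,b')=1$ forces $\gcd(a,A,b,B)=1$, the group $\mathbb{Z}^{2}/M\mathbb{Z}^{2}$ is cyclic, so together with $\det M=n$ this makes $M$ arithmetically equivalent to $D=\mathrm{diag}(1,n)$; fix $U,V\in GL_{2}(\mathbb{Z})$ with $M=UDV$ and let $(u_{11},u_{21})^{T}$ denote the first column of $U$, a primitive vector since $U\in GL_{2}(\mathbb{Z})$.

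Now set $w=u_{11}+u_{21}\tau\in\Lambda$ and $C=\langle\tfrac1n w+\Lambda\rangle\subseteq E[n]$. Primitivity of $(u_{11},u_{21})$ shows at once that $\tfrac1n w+\Lambda$ has order exactly $n$ (if $\tfrac kn w\in\Lambda$ then $n\mid k\gcd(u_{11},u_{21})=k$), so $C$ is cyclic of order $n$ and $E/C=\mathbb{C}/\Lambda_{C}$ with $\Lambda_{C}=\Lambda+\mathbb{Z}\tfrac1n w$. The crux is the identity
\[
\Lambda_{C}=\tfrac1n\,\mu\Lambda,\qquad\text{i.e., in coordinates,}\qquad n\mathbb{Z}^{2}+\mathbb{Z}\,(u_{11},u_{21})^{T}=M\mathbb{Z}^{2}.
\]
To prove it I would note, on one side, that $M\cdot\mathrm{adj}(M)=(\det M)I=nI$ with $\mathrm{adj}(M)\in M_{2}(\mathbb{Z})$, so $n\mathbb{Z}^{2}\subseteq M\mathbb{Z}^{2}$; and on the other side, reducing $M=UDV$ modulo $n$, where $V$ is invertible and $D\equiv\mathrm{diag}(1,0)$, the image of $M$ in $(\mathbb{Z}/n\mathbb{Z})^{2}$ equals $\langle U e_{1}\bmod n\rangle=\langle(u_{11},u_{21})^{T}\bmod n\rangle$. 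Taking preimages in $\mathbb{Z}^{2}$, and using $n\mathbb{Z}^{2}\subseteq M\mathbb{Z}^{2}$, gives $M\mathbb{Z}^{2}=M\mathbb{Z}^{2}+n\mathbb{Z}^{2}=\mathbb{Z}(u_{11},u_{21})^{T}+n\mathbb{Z}^{2}$, which is the claim. Then the conclusion is immediate: $E/C=\mathbb{C}/\bigl(\tfrac{\mu}{n}\Lambda\bigr)$ is homothetic to $\mathbb{C}/\Lambda=E$ since $\tfrac{\mu}{n}\in\mathbb{C}^{\times}$ (as $N(\mu)=n\neq 0$), the isomorphism being multiplication by $n/\mu$. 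Hence $C$ is a cyclic subgroup of order $n$ with $E/C\simeq E$, as asserted.

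The hard part is the displayed lattice identity: one has to recognize that the first column of $U$ records precisely the image of $M$ modulo $n$ (up to the choice of a primitive generator), equivalently that the cyclic quotient $\Lambda/\mu\Lambda$, pulled back to a lattice of index $n$ over $\Lambda$, is generated over $\Lambda$ exactly by $\tfrac1n$ times that column. Keeping the left/right factors in $M=UDV$ straight and handling the reduction modulo $n$ correctly is the only real bookkeeping; the surrounding steps are formal, and any admissible choice of $(a,b')$ and of $(U,V)$ produces a (possibly different but equally valid) such $C$.
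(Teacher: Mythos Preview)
Your argument cleanly establishes that, given $(a,b')$ as in ii), the subgroup $C=\langle\tfrac{u_{11}+u_{21}\tau}{n}\rangle$ of iii) is cyclic of order $n$ and satisfies $E/C\simeq E$. The organizing idea --- that $M$ is precisely the matrix of multiplication by $\mu=a+b\tau$ on $\Lambda$, so $\mu\Lambda=M\mathbb Z^{2}$ and $\det M=N_{\mathbb Q(\tau)/\mathbb Q}(\mu)>0$ --- is more conceptual than the paper's treatment. The paper instead introduces $\lambda=\tfrac{a+b\tau}{n}$ and tracks the two inclusions $\lambda L\subseteq L'$ and $L'\subseteq\lambda L$ through the coordinate systems (3)--(7); you bypass this by isolating the single lattice identity $M\mathbb Z^{2}=n\mathbb Z^{2}+\mathbb Z\,Ue_{1}$ directly from the Smith decomposition, which is exactly what the paper's closing computation with $g'=\tfrac{s'(u_{11}+u_{21}\tau)}{n}$ amounts to.

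That said, your proposal covers only iii) and the ``$\Leftarrow$'' of ii); the theorem as stated also requires i) and the ``$\Rightarrow$'' of ii), and this is where most of the paper's work lies. For i), one must show that existence of a nontrivial cyclic $C$ with $E/C\simeq E$ forces $\tau$ to be quadratic over $\mathbb Q$; the paper reads this off from the relation $(a+b\tau)\tau=A+B\tau$ coming out of the homothety. For the forward direction of ii), the paper starts from an \emph{arbitrary} generator $g=\tfrac{\alpha_{1}+\alpha_{2}\tau}{n}$ with $\gcd(\alpha_{1},\alpha_{2},n)=1$, extracts $a,b,A,B$ from $\lambda$, and then argues via divisibility that $\det M=\pm n$ and $\gcd(a,A,b,B)=1$, finally reparametrizing the solution set by $(a,b')$ with $\gcd(a,b')=1$. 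Your construction, going from $(a,b')$ to $C$, does not supply this converse. To complete the proof along your lines you would need to run the identification of $M$ with multiplication-by-$\mu$ in reverse: start from the composite isogeny $E\to E/C\simeq E$, take $\mu\in\mathrm{End}(E)$ to be the resulting endomorphism, and then show that its matrix has the required shape and gcd condition.
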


\begin{proof}

 We have that L=$\mathbb{Z}+\mathbb{Z}\tau$ and $C=\langle\bar{g}\rangle\leq E=\frac{\mathbb{C}}{L}$
  cyclic subgroup of order $n$ hence $C=\frac{L+\mathbb{Z}g}{L}$ and $n\bar{g}=\bar{0}$. It follows that $ng\in L$ and hence we can write
   $g\in\mathbb{C}$ as
 \begin{equation}
  g=\frac{\alpha_{1}+\alpha_{2}\tau}{n}, \alpha_{1},\alpha_{2}\in\mathbb{Z}.
 \end{equation}\\
  Since ${\rm ord}(\bar{g})=n$ it follows easily that ${\rm gcd}(\alpha_{1},\alpha_{2},n)=1$.\\
 We have that $\frac{E}{C}\simeq\frac{\frac{\mathbb{C}}{L}}{\frac{L+\mathbb{Z}g}{L}}\simeq\frac{\mathbb{C}}{L+\mathbb{Z}g}=\frac{\mathbb{C}}{L'}$,
  where we denoted by $L'=L+\mathbb{Z}g=\mathbb{Z}+\mathbb{Z}\tau+\mathbb{Z}g$.\\
 It is known that $\frac{\mathbb{C}}{L}\simeq\frac{\mathbb{C}}{L'}\Leftrightarrow\exists \lambda\in\mathbb{C}$ such that $\lambda L=L'$.
 Consequently $E\simeq\frac{E}{C}\Leftrightarrow\frac{\mathbb{C}}{L}\simeq\frac{\mathbb{C}}{L'}\Leftrightarrow\exists
\lambda\in\mathbb{C}$ such that
 \begin{equation}
 \lambda(\mathbb{Z}+\mathbb{Z}\tau)=\mathbb{Z}+\mathbb{Z}\tau+\mathbb{Z}g.
 \end{equation}
 The relation (2) can be studied in the following way:\\

 "$\subseteq$" Since $g$ is given by (1), from the inclusion "$\subseteq$" one shows
 that (note that this is not an equivalence):
 \begin{center}
  $\exists a,b,A,B\in \mathbb{Z}$ such that
 \begin{equation}
 \begin{cases}
 \lambda=\frac{a+b\tau}{n}\\
 \lambda\tau=\frac{A+B\tau}{n}\\
 \end{cases}
 \end{equation}
 \end{center}

 "$\supseteq$" The inclusion "$\supseteq$" is equivalent with
 \begin{center}
 \begin{equation}
 \exists \alpha,\beta,u,v,s,t\in\mathbb{Z}\quad \text{such that}
\begin{cases}
1=\alpha\lambda+\beta\lambda\tau\\
\tau=u\lambda+ v\lambda\tau\\
g=s\lambda+ t\lambda\tau\\
\end{cases}
\end{equation}
\end{center}

By replacing (3) and (1) in (4) we obtain
\begin{equation}
\begin{cases}
1=\alpha\frac{a+b\tau}{n}+\beta\frac{A+B\tau}{n}\\
\tau=u\frac{a+b\tau}{n}+ v\frac{A+B\tau}{n}\\
\frac{\alpha_{1}+\alpha_{2}\tau}{n}=s\frac{a+b\tau}{n}+t\frac{A+B\tau}{n}\\
\end{cases}
\Longleftrightarrow
\begin{cases}
\left(
\begin{array}{ccc}
a & A  \\
b & B \\
\end{array} \right)\cdot
\left(
\begin{array}{ccc}
\alpha & u  \\
\beta & v \\
\end{array} \right)= nI_{2}\\
\\
\left(
\begin{array}{ccc}
a & A  \\
b & B \\
\end{array} \right)\cdot
\left(
\begin{array}{ccc}
s \\
t \\
\end{array} \right)=\left(
\begin{array}{ccc}
\alpha_{1} \\
\alpha_{2} \\
\end{array} \right)
\end{cases}
\end{equation}\\
\end{proof}

Note that the relation (2) is not yet equivalent with (3) and (5).
In order to obtain the equivalent conditions for (2) let us look at the inclusion "$\subseteq$".\\
The inclusion "$\subseteq$" means
 that there exist $a_{1},b_{1},c_{1},a_{2},b_{2},c_{2}\in \bZ$ such that
\begin{center}
\begin{equation}
\begin{cases}
\lambda=a_{1}+b_{1}\tau+c_{1}g\\
\lambda \tau=a_{2}+b_{2}\tau+c_{2}g\\
\end{cases}
\Longleftrightarrow
\begin{cases}
\frac{a+b\tau}{n}=a_{1}+b_{1}\tau+c_{1}\frac{\alpha_{1}+\alpha_{2}\tau}{n}\\
\frac{A+B\tau}{n}=a_{2}+b_{2}\tau+c_{2}\frac{\alpha_{1}+\alpha_{2}\tau}{n}\\
\end{cases}
\Longleftrightarrow
\end{equation}
\begin{equation*}
\begin{cases}
a=na_{1}+ c_{1}\alpha_{1}\\
b=nb_{1}+ c_{1}\alpha_{2}\\
\end{cases}
\text{and}
\begin{cases}
A=na_{2}+ c_{2}\alpha_{1}\\
B=nb_{2}+ c_{2}\alpha_{2}\\
\end{cases}
\Leftrightarrow
\end{equation*}\\
\begin{equation}
\begin{cases}
\hat{a}=\hat{ c_{1}}\hat{\alpha_{1}}\\
\hat{b}=\hat{ c_{1}}\hat{\alpha_{2}}\\
\end{cases}
\text{and}
\begin{cases}
\hat{A}=\hat{ c_{2}}\hat{\alpha_{1}}\\
\hat{B}=\hat{ c_{2}}\hat{\alpha_{2}}\\
\end{cases}
\text{in}\quad\frac{\bZ}{n \bZ}
\end{equation}
\end{center}

We obtain that the inclusion $"\subseteq"$ is equivalent with the
existence of  $c_{1},c_{2}\in\mathbb{Z}$ which both satisfy (6),
i.e.  (2) is equivalent with (5) and (6). \\

We've obtained that the hypothesis $E\simeq\frac{E}{C}$ with $C$
cyclic subgroup of order $n$ is equivalent with the relations (2)
 and $(\alpha_{1},\alpha_{2},n)=1$, i.e. is equivalent with the relations (5), (6), $(\alpha_{1},\alpha_{2},n)=1$ and $(a+b\tau)\tau=A+B\tau$.
  Assume now that these four conditions are satisfied and let us replace them with
  conditions which are easier to be verified.\\

Denote by $M$ the matrix
      \begin{equation*}
M=\left(
\begin{array}{ccc}
a & A  \\
b & B \\
\end{array} \right)
\text{hence}\quad \hat{M}=\left(
\begin{array}{ccc}
\hat{a} &\hat{A}  \\
\hat{b} &\hat{B}  \\
\end{array} \right)=\left(
\begin{array}{ccc}
\hat{ c_{1}}\hat{\alpha_{1}} & \hat{ c_{2}}\hat{\alpha_{1}} \\
 \hat{ c_{1}}\hat{\alpha_{2}}& \hat{ c_{2}}\hat{\alpha_{2}}\\
\end{array} \right)
\text{in}\quad\frac{\bZ}{n \bZ} \Longrightarrow
\end{equation*}
\begin{equation*}
 {\rm det}\hat{M}=\hat{0}\quad \text{in}\quad\frac{\bZ}{n \bZ}
 \Longrightarrow \quad \text{det M} \vdots  n
 \end{equation*}\\
By using now (5) we obtain ${\rm det}(M)|n^{2}$ and so ${\rm
det}(M)=nk, k|n$. From (5) we get $\alpha=\frac{nB}{det
M}=\frac{B}{k}$
 hence $k|B$. Similarly one obtains $k|b,k|A,k|a$ and hence from the second equality of (5) we obtain that $k|\alpha_{1},\alpha_{2}$.
 Since $k|n$ and $(\alpha_{1},\alpha_{2},n)=1$ we obtain $k=\pm 1$, consequently ${\rm det}(M)=\pm n$.\\

We will prove that the relations (5), ${\rm gcd}(\alpha_{1},\alpha_{2},n)=1$ and ${\rm det}(M)=\pm n$ imply (6):\\

  \indent In did, from $(\alpha_{1},\alpha_{2},n)=1$  it follows that $\exists \mu_{1},\mu_{2},\mu_{3}\in\mathbb{Z}$
  such that $\mu_{1}\alpha_{1}+ \mu_{2}\alpha_{2}+ \mu_{3}n=1$.
  \indent Choose  $\hat{c}_{1}=\widehat{\mu_{1}a+\mu_{2}b}$. We
  then have
  $\hat{c}_{1}\hat{\alpha}_{1}=\widehat{(\mu_{1}a+\mu_{2}b)}\hat{\alpha}_{1}= a\cdot\widehat{\mu_{1}\alpha_{1}}+ b\cdot\widehat{\mu_{2}\alpha_{1}}=a\widehat{(1-\mu_{2}\alpha_{2}- \mu_{3}n)}+ b\cdot\widehat{\mu_{2}\alpha_{1}}=\hat{a}+\hat{\mu}_{2}\widehat{(b\alpha_{1}-a\alpha_{2})}$.\\
  From (5) we get that $t=\frac{a\alpha_{2}-b\alpha_{1}}{\text{det} M}=\frac{a\alpha_{2}-b\alpha_{1}}{\pm n}\Longrightarrow a\alpha_{2}-b\alpha_{1}\vdots n$.
  Consequently $\hat{c}_{1}\hat{\alpha}_{1}=\hat{a}$, q.e.d.\\
  Similarly $\hat{c}_{1}\hat{\alpha}_{2}=\widehat{(\mu_{1}a+\mu_{2}b)}\hat{\alpha}_{2}= a\cdot\widehat{\mu_{1}\alpha_{2}}+ b\cdot\widehat{\mu_{2}\alpha_{2}}=a\cdot\widehat{\mu_{1}\alpha_{2}}+ b\widehat{(1-\mu_{1}\alpha_{1}- \mu_{3}n)} =\hat{b}+\hat{\mu}_{1}\widehat{(a\alpha_{2}-b\alpha_{1})}=\hat{b}$, q.e.d.\\
  Similarly, by choosing $\hat{c}_{2}=\widehat{\mu_{1}A+\mu_{2}B}$ one obtains the expressions of (6) regarding $A,B$.\\
  We have obtained that (6) follows from the relations  ${\rm gcd}(\alpha_{1},\alpha_{2},n)=1$, (5) and ${\rm det} (M)=\pm n$.\\
  Moreover, we can renounce at the first equality of (5) since it can be deduced from the condition ${\rm det} (M)=\pm n$:\\
   \indent In did, if ${\rm det} (M)=\pm n$ we have $M\cdot M^{*}={\rm det} (M)\cdot I_{2}=(\pm n)I_{2}$,
   hence the existence of the elements
   $\alpha,\beta, u, v$ follows from the existence of the adjoint matrix $ M^{*}$.\\
  In conclusion, the hypothesis $E\simeq\frac{E}{C}$ with $C$ cyclic subgroup of order $n$ is equivalent with the following four relations:
  the second equality from (5),  $(\alpha_{1},\alpha_{2},n)=1$, ${\rm det} (M)=\pm n$  and $(a+b\tau)\tau=A+B\tau$.\\

   We prove now that we can renounce at the second equality of (5) and at ${\rm gcd}(\alpha_{1},\alpha_{2},n)=1$,
    by replacing them in the above equivalence with the condition  ${\rm gcd}(a,A,b,B)=1$ :\\

  Let $d={\rm gcd}(a,A,b,B)$. From the second equality from (5) we obtain  $d|\alpha_{1}$, $d|\alpha_{2}$ and since $d|\text{det} M=\pm n$,
  it follows that $d|{\rm gcd}(\alpha_{1},\alpha_{2},n)=1$, i.e. ${\rm gcd}(a,A,b,B)=1$. We have that $M$ is arithmetically equivalent with:\\
 \begin{equation*}
 M=\left(
\begin{array}{ccc}
a & A  \\
b & B \\
\end{array} \right)\sim
\left(
\begin{array}{ccc}
1 & 0 \\
0 & n \\
\end{array} \right),\quad \text{i.e.}\quad \exists U,V\in SL_{2}(\mathbb{Z})\quad \text{such that}
\end{equation*}
\begin{equation*}
\left(
\begin{array}{ccc}
a & A  \\
b & B \\
\end{array} \right)=U\cdot
\left(
\begin{array}{ccc}
1 & 0 \\
0 & n \\
\end{array} \right)\cdot V
 \end{equation*}

 The second condition of (5) becomes:
\begin{equation*}
 \left(
\begin{array}{ccc}
a & A  \\
b & B \\
\end{array} \right)\cdot
\left(
\begin{array}{ccc}
s \\
t \\
\end{array} \right)=\left(
\begin{array}{ccc}
\alpha_{1} \\
\alpha_{2} \\
\end{array} \right)\Longleftrightarrow
U\cdot \left(
\begin{array}{ccc}
1 & 0 \\
0 & n \\
\end{array} \right)\cdot V\cdot
\left(
\begin{array}{ccc}
s \\
t \\
\end{array} \right)=\left(
\begin{array}{ccc}
\alpha_{1} \\
\alpha_{2} \\
\end{array} \right).
\end{equation*}
\begin{equation*}
\text{We denote by}\quad V\cdot \left(
\begin{array}{ccc}
s \\
t \\
\end{array} \right)=
\left(
\begin{array}{ccc}
s' \\
t' \\
\end{array} \right)
\text{and obtain the equivalent equation}\quad
\end{equation*}
\begin{equation*}
\left(
\begin{array}{ccc}
\alpha_{1} \\
\alpha_{2} \\
\end{array} \right)=U\cdot
\left(
\begin{array}{ccc}
s'\\
nt'\\
\end{array} \right), U\in SL_{2}(\mathbb{Z}).
\end{equation*}

Viceversa, remark that the second condition of (5) follows from
${\rm det}(M)=\pm{n}$ and from the relation ${\rm gcd}(a,A,b,B)=1$
as it follows: choose $s',t'\in\mathbb{Z}$ such that $(s',n)=1$.
Consider the matrix $(\alpha_{1}\quad \alpha_{2})=(s'\quad
nt')\cdot U^{t}$, where  $U$ is an invertible matrix from the
arithmetically equivalent decomposition of $M$. We have that ${\rm
gcd}(\alpha_{1},\alpha_{2},n)={\rm
gcd}((\alpha_{1},\alpha_{2}),n)={\rm gcd}(( s',nt'),n)={\rm
gcd}(s',n)=1$
  and the second equality of (5) follows from the previous construction, q.e.d.\\

Let us analyze now the condition $(a+b\tau)\tau=A+B\tau$. This
means that $\tau$
 is algebraic over $\mathbb{Q}$ and satisfies the second degree equation $b\tau^{2}=(B-a)\tau+A$ $(*)$.\\
Let $\mu_{\tau}=X^{2}-uX-v\in \mathbb{Q}[X]$ be the minimal
polynomial of $\tau$, with $u=\frac{u_{1}}{u_{2}}\in \mathbb{Q},
v=\frac{v_{1}}{v_{2}}\in \mathbb{Q}, u_{1}, u_{2}, v_{1},
v_{2}\in\mathbb{Z}, {\rm gcd}(u_{1},u_{2})={\rm
gcd}(v_{1},v_{2})=1$ and let $d_{2}:={\rm gcd}(u_{2},v_{2})$. We
identify now the coefficients of the equation $(*)$ and of the
minimal polynomial of $\tau$ and obtain:
\begin{center}
\begin{equation*}
\begin{cases}
\frac{A}{b}=v=\frac{v_{1}}{v_{2}}\\
\frac{B-a}{b}=u=\frac{u_{1}}{u_{2}}\\
\end{cases}
\text{hence}
\begin{cases}
v_{2}A=v_{1}b\\
u_{2}(B-a)=u_{1}b\\
\end{cases}
\end{equation*}
\end{center}
\begin{center}
\begin{equation*}
\text{Since}
\begin{cases}
v_{2}A=v_{1}b\\
{\rm gcd}(v_{1},v_{2})=1\\
 \end{cases}
 \text{we get that}
 \begin{cases}
v_{2}|b\\
v_{1}|A\\
\end{cases}
\end{equation*}
\end{center}
\begin{center}
\begin{equation*}
\text{From}
\begin{cases}
u_{2}(B-a)=u_{1}b\\
{\rm gcd}(u_{1},u_{2})=1\\
 \end{cases}
 \text{it follows that}
 \begin{cases}
u_{2}|b\\
u_{1}|B-a\\
\text{we already have}\quad v_{2}|b\\
\end{cases}
{\rm hence} \ \
 {\rm lcm}[u_{2},v_{2}]|b.
\end{equation*}
\end{center}
\begin{center}
\begin{equation*}
\text{Consequently}\quad \exists b'\in \mathbb{Z}\quad \text{such
that}
\begin{cases}
b=[u_{2},v_{2}]b'=\frac{u_{2}v_{2}}{d_{2}}b'\\
v_{2}A=v_{1}b\\
u_{2}(B-a)=u_{1}b\\
\end{cases}
\text{hence}
\begin{cases}
b=\frac{u_{2}v_{2}}{d_{2}}b'\\
A=\frac{u_{2}v_{1}}{d_{2}}b'\\
B=a+\frac{u_{1}v_{2}}{d_{2}}b'\\
\end{cases}
\end{equation*}
\end{center}
where $d_{2}={\rm gcd}(u_{2},v_{2})$ and $\tau^{2}=u\tau+v$.\\

We prove now the equivalence: \\
\begin{equation*}
{\rm gcd}(a,A,b,B)=1 \Longleftrightarrow {\rm gcd}(a,b')=1.
\end{equation*}
$"\Longrightarrow"$ Denote by $d'={\rm gcd}(a,b')$. It follows
that
 $d'|{\rm gcd}(a,A,b,B)$ hence $d'|1$, i.e. $d'=1$.\\
$"\Longleftarrow"$ Let $d={\rm gcd}(a,A,b,B)$. Then $d|a$ and
since ${\rm gcd}(a,b')=1$ we get ${\rm gcd}(d,b')=1$. Moreover
$d_{2}=(u_{2},v_{2})$ so let $u_{2}=d_{2}u^{'}_{2}$ and
$v_{2}=d_{2}v^{'}_{2}$ with ${\rm gcd}(u^{'}_{2},v^{'}_{2})=1$.
Since $d|a$ and $d|B$ we have that $d|B-a$ and obtain:
\begin{center}
 \begin{equation*}
\begin{cases}
d|b=\frac{u_{2}v_{2}}{d_{2}}b'\\
d|A=\frac{u_{2}v_{1}}{d_{2}}b'=u^{'}_{2}v_{1}b'\\
d|B-a=\frac{u_{1}v_{2}}{d_{2}}b'=v^{'}_{2}u_{1}b'\\
\end{cases}{\rm hence}
\begin{cases}
d|\frac{u_{2}v_{2}}{d_{2}}=u^{'}_{2}v_{2}=u_{2}v^{'}_{2}\\
d|u^{'}_{2}v_{1}\\
d|v^{'}_{2}u_{1}\\
\end{cases}
\end{equation*}
\end{center}
 If $d|u^{'}_{2}$, since ${\rm gcd}(u^{'}_{2},v^{'}_{2})=1$ one obtains $d|u_{1}$. But ${\rm gcd}(u_{1},u_{2})=1$ and consequently $d=1$.\\
If $d|v_{2}$, using the fact that ${\rm gcd}(v_{1},v_{2})=1$ we
get that $d|u^{'}_{2}$. Since $d|u^{'}_{2}$ and ${\rm
gcd}(u_{1},u_{2})=1$ we have ${\rm gcd}(d,u_{1})=1$. Consequently
$d|v^{'}_{2}$ and since we already have $d|u^{'}_{2}$, we obtain
$d=1$.

 We prove now that $n={\rm det}(M)>0$:\\
 In order to simplify the notations, denote by $d:=d_{2}$ and note that $(a,A,b,B)=\Big(a,\frac{u_{2}v_{1}}{d}b',\frac{u_{2}v_{2}}{d}b', a+\frac{u_{1}v_{2}}{d}b'\Big)$.\\
 We obtain that
 ${\rm det}(M)=aB-bA=a\Big(a+\frac{u_{1}v_{2}}{d}b'\Big)-\Big(\frac{u_{2}v_{2}}{d}b'\Big)\Big(\frac{u_{2}v_{1}}{d}b'\Big)
=a^{2}+\frac{u_{1}v_{2}}{d}ab'-\frac{v_{1}v_{2}u_{2}^{2}}{d^{2}}b'^{2}=\Big(a+\frac{u_{1}v_{2}}{2d}b'\Big)^{2}-\frac{u_{1}^{2}v_{2}^{2}}{4d^{2}}b'^{2}-\frac{v_{1}v_{2}u_{2}^{2}}{d^{2}}b'^{2}=\Big(a+\frac{u_{1}v_{2}}{2d}b'\Big)^{2}-\frac{u_{1}^{2}v_{2}^{2}+4v_{1}v_{2}u_{2}^{2}}{4d^{2}}b'^{2}=\Big(a+\frac{u_{1}v_{2}}{2d}b'\Big)^{2}-\frac{u_{2}^{2}v_{2}^{2}\Big[\Big(\frac{u_{1}}{u_{2}}\Big)^{2}+4\frac{v_{1}}{v_{2}}\Big]}{4d^{2}}b'^{2}=\Big(a+\frac{u_{1}v_{2}}{2d}b'\Big)^{2}-\frac{u_{2}^{2}v_{2}^{2}\Delta}{4d^{2}}b'^{2}$.
  Since $\Delta<0$ and $u_{2}\neq 0, v_{2}\neq 0$ it follows easily that ${\rm  det}(M)>0$.\\

 In order to describe the subgroup $C$ we deduce that:
 \begin{center}
\begin{equation*}
 \left(
\begin{array}{ccc}
\alpha_{1} \\
\alpha_{2} \\
\end{array} \right)=U\cdot
\left(
\begin{array}{ccc}
s' \\
nt' \\
\end{array} \right)\Longleftrightarrow
\left(
\begin{array}{ccc}
\alpha_{1} \\
\alpha_{2} \\
\end{array} \right)=
\left(
\begin{array}{ccc}
u_{11} & u_{12}  \\
u_{21} & u_{22} \\
\end{array} \right)\cdot
\left(
\begin{array}{ccc}
s' \\
nt' \\
\end{array} \right)
\end{equation*}
\end{center}
\begin{center}
\begin{equation*}
\text{i.e.}\quad
\begin{cases}
\alpha_{1}=u_{11}s'+u_{12}nt'\\
\alpha_{2}=u_{21}s'+u_{22}nt'\\
\end{cases}.
\end{equation*}
\end{center}
We obtain that:
\begin{center}
\begin{equation*}
  g=\frac{\alpha_{1}+\alpha_{2}\tau}{n}=\frac{u_{11}s'+u_{12}nt'+(u_{21}s'+u_{22}nt')\tau}{n}=
  \end{equation*}
  \begin{equation*}
  \frac{u_{11}s'+u_{21}s'\tau+ n(u_{12}t'+ u_{22}t'\tau)}{n}=\frac{s'(u_{11}+u_{21}\tau)}{n}+ t'(u_{12}+ u_{22}\tau)=g'+t'(u_{12}+ u_{22}\tau),
 \end{equation*}
 \begin{equation*}
 \text{where we denoted by}\quad
 g'=\frac{s'(u_{11}+u_{21}\tau)}{n}.
 \end{equation*}
 \end{center}
 Consequently,
 $C=\frac{L+\mathbb{Z}g}{L}=\frac{L+\mathbb{Z}g'}{L}=\langle\overline{g'}\rangle$.
 But ${\rm gcd}(s',n)={\rm gcd}(\alpha_1,\alpha_2,n)=1$ hence there exist $a,b\in\mathbb{Z}$ such that $as'+bn=1$. It follows that $ag'=a\frac{s'(u_{11}+u_{21}\tau)}{n}=
 \frac{(1-bn)(u_{11}+u_{21}\tau)}{n}=\frac{u_{11}+u_{21}\tau}{n}-b(u_{11}+u_{21}\tau)$.
 We have obtained that $C=\frac{L+\mathbb{Z}g'}{L}=\langle\overline{g'}\rangle=\langle\frac{u_{11}+u_{21}\tau}{n}\rangle$.

 \begin{remark}
 For  $a,b'$ fixed integers such that ${\rm gcd}(a,b')=1$ the subgroup $C$ is uniquely determined hence we can denote  $C=C_{a,b'}$.
 \end{remark}

Recall now that for $E, E'$ complex elliptic curves and $C\leq
(E,+)$, $C'\leq(E',+)$ cyclic subgroups of order $n$ of $E$ and
$E'$ respectively, one has that
 $(E,C)\sim(E',C')\Longleftrightarrow\exists u:E\longrightarrow E'$ isomorphism such that $u(C)=C'$.\\
The question we want to answer is the following:\\

" Let $E$ be a complex elliptic curve. When $\exists C\leq(E,+)$
cyclic subgroup of order $n$ of $E$ such that
 $(E,C)\sim(\frac{E}{C},\frac{E[n]}{C})$, where $E[n]=\{x\in E:nx=0\}$ $?$"\\

In order to answer this question we will give the following
criterion:
\begin{theorem}\label{theorem:2} Let $E$ be an elliptic curve defined over $\mathbb{C}$ satisfying the conditions of \ref{theorem:1} i). Then the following are equivalent:
\begin{center}
 \indent $\{\exists C\leq(E,+)$ cyclic subgroup of order $n$ of $E$ such that
 $(E,C)\sim(\frac{E}{C},\frac{E[n]}{C})\}\Longleftrightarrow$ \\
 $\{
 \exists (a,b')\in\mathbb{Z}^{2}$ with ${\rm gcd}(a,b')=1$ such that ${\rm det}(M)=n$ and $n|{\rm Tr} (M)\}  \Longleftrightarrow$\\
 $\{\exists (a,b')\in\mathbb{Z}^{2}$,
 with ${\rm gcd}(a,b')=1$ such that ${\rm det}(M)=n$ and $M^{2}\equiv O_{2}({\rm mod}
 n)$\},
 \end{center}

where $M$ is the matrix from \ref{theorem:1} ii) and ${\rm Tr}
(M)$ the trace of $M$.
\end{theorem}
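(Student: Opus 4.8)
The plan is to build on the analysis in Theorem \ref{theorem:1}, where it was shown that the existence of a cyclic subgroup $C=C_{a,b'}$ of order $n$ with $E/C\simeq E$ is equivalent to the existence of $(a,b')\in\mathbb{Z}^2$ with $\gcd(a,b')=1$ and $\det(M)=n$. The extra requirement here is that the isomorphism $E/C\simeq E$ can be chosen so that it carries $E[n]/C$ to $C$; this is exactly the condition $(E,C)\sim(E/C,E[n]/C)$. So the first step is to translate, in the lattice model, what it means for the isomorphism $\lambda\colon\mathbb{C}/L\to\mathbb{C}/L'$ (where $L'=L+\mathbb{Z}g$, $g=(\alpha_1+\alpha_2\tau)/n$) to send $E[n]/C$ into the image of $C$. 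Under $\pi\colon E\to E/C\cong\mathbb{C}/L'$, the subgroup $E[n]/C$ corresponds to $\frac{1}{n}L'/L'$, which is cyclic of order $n$ generated by $\overline{1/n}$ (using $E[n]/C\cong\mathbb{Z}/n\mathbb{Z}$), while $C$ itself maps to $\langle\overline{g}\rangle\subset\mathbb{C}/L'$ — wait, more precisely one should track $C\subset E$ through $\lambda$: the condition is $\lambda^{-1}\big(\tfrac1n L' /L'\big) = C$ inside $E=\mathbb{C}/L$, i.e. $\lambda^{-1}(\tfrac1n L') = L + \mathbb{Z}g$.

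The second step is to make this condition explicit using the matrix $M$. Recall $\lambda=(a+b\tau)/n$ and $\lambda\tau=(A+B\tau)/n$, so multiplication by $n\lambda$ carries the basis $(1,\tau)$ of $L$ to $(a+b\tau,\,A+B\tau)$, i.e. $n\lambda\cdot L$ has $M^{t}$ (or $M$, depending on column/row conventions) as its transition matrix with respect to $(1,\tau)$. Thus $\lambda^{-1}(\tfrac1n L')=\tfrac1n L' $ pulled back is governed by $M^{-1}$ modulo $n$. Chasing the identifications, the statement ``$\lambda$ sends $E[n]/C$ to $C$'' should reduce to a congruence condition on $M$ modulo $n$: concretely, that $M$ annihilates $(\mathbb{Z}/n\mathbb{Z})^2$ after composition with itself, i.e. $M^2\equiv O_2\pmod n$. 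The point is that $C=\langle\overline g\rangle$ is the kernel of reduction $E\to E/C$ identified via $M$, and $E[n]/C$ is $\tfrac1n$ of the image lattice; asking these to match up under $\lambda$ forces $M\cdot M\equiv 0$. This gives the third equivalence in the theorem.

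The third step is the elementary linear-algebra fact that, for a $2\times 2$ integer matrix $M$ with $\det(M)=n$, one has $M^2\equiv O_2\pmod n$ if and only if $n\mid \operatorname{Tr}(M)$. This is immediate from Cayley–Hamilton: $M^2-\operatorname{Tr}(M)\,M+\det(M)\,I_2=O_2$, so $M^2 = \operatorname{Tr}(M)\,M - n I_2 \equiv \operatorname{Tr}(M)\,M\pmod n$; if $n\mid\operatorname{Tr}(M)$ then $M^2\equiv O_2$, and conversely reducing mod $n$ and taking traces gives $\operatorname{Tr}(M^2)\equiv \operatorname{Tr}(M)^2\equiv 0$, while $\operatorname{Tr}(M^2)=\operatorname{Tr}(M)^2-2n\equiv\operatorname{Tr}(M)^2$; combined with $\det(M^2)=n^2\equiv 0$ and a short argument on $2\times2$ nilpotents mod $n$ (or just comparing entries of $M^2\equiv\operatorname{Tr}(M)M$) one recovers $n\mid\operatorname{Tr}(M)$. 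This yields the second equivalence. Finally one assembles the chain: the first $\Longleftrightarrow$ comes from Step 1–2, the second from Step 3, and then Theorem \ref{theorem:1} ii) guarantees that the matrix $M$ in question is precisely the one attached to the pair $(a,b')$, so $\det(M)=n$ already encodes $E/C\simeq E$ and the added congruence encodes the matching of torsion data.

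The main obstacle I expect is Step 2 — pinning down the exact congruence on $M$ that corresponds to ``$\lambda$ maps $E[n]/C$ onto $C$.'' One has to be careful about which lattice plays which role (the source $L$ versus the target $L'=L+\mathbb{Z}g$), about the direction of the isomorphism, and about the identification $E[n]/C\cong\mathbb{Z}/n\mathbb{Z}$ versus the explicit generator $\overline{1/n}$ of $\tfrac1n L'/L'$; a sign or transpose error here would change $M$ to $M^t$ or $M^{*}$, though since $\det M=n$ these differ from $M$ only in ways that do not affect the trace condition, so the final statement is robust. Once the correct congruence is identified, everything else is bookkeeping plus the one-line Cayley–Hamilton argument.
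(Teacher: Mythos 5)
Your overall strategy is the same as the paper's: express the condition $(E,C)\sim(E/C,E[n]/C)$ as an equality of lattices involving $\lambda$, reduce that to a congruence $M^2\equiv O_2\pmod n$, and then convert to $n\mid\operatorname{Tr}(M)$ via Cayley--Hamilton. But there are two genuine gaps. First, your lattice identification is wrong at the decisive point: under $\pi\colon E\to E/C\cong\mathbb{C}/L'$ the subgroup $E[n]/C$ is $\tfrac1n L/L'$ (image of $E[n]=\tfrac1nL/L$), \emph{not} $\tfrac1n L'/L'$ --- the latter has order $n^2$, not $n$. With your version the condition ``$\lambda^{-1}(\tfrac1n L')=L+\mathbb{Z}g=L'$'' reads $\tfrac1n L=L'$, which is impossible for $n>1$ since $[\tfrac1nL:L]=n^2$ while $[L':L]=n$. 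The correct condition is $u(C)=E[n]/C$, i.e. $L'+\mathbb{Z}\lambda g=\tfrac1n L$, which (using $\lambda L=L'$) collapses to $n\lambda^2L=L$; this is the equation the rest of the argument must feed on, and you never arrive at it.

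Second, the heart of the proof --- that $n\lambda^2L=L$ is equivalent to $M^2\equiv O_2\pmod n$ --- is only asserted (``chasing the identifications \dots forces $M\cdot M\equiv 0$''), and you yourself flag it as the step you cannot pin down. The paper does it by writing $n\lambda^2=(a+b\tau)^2/n$ and $n\lambda^2\tau=(a+b\tau)(A+B\tau)/n$, eliminating $\tau^2$ via $b\tau^2=(B-a)\tau+A$, and recognizing the resulting integer matrix $M_1$ as satisfying $M^2=nM_1^{t}$; one must also check $\det M_1=1$ (it follows from $\det M=n$), since $n\lambda^2L=L$ requires $M_1\in SL_2(\mathbb{Z})$ and not merely integrality. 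Finally, a smaller point: in the converse of the Cayley--Hamilton step, $M^2\equiv\operatorname{Tr}(M)M\pmod n$ together with $M^2\equiv O_2$ only gives $n\mid\operatorname{Tr}(M)\cdot a$, $n\mid\operatorname{Tr}(M)\cdot A$, etc.; you need $\gcd(a,A,b,B)=1$ (which holds here by Theorem \ref{theorem:1}, but must be invoked) to conclude $n\mid\operatorname{Tr}(M)$ --- your trace computation alone only yields $n\mid\operatorname{Tr}(M)^2$, which is weaker.
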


\begin{proof}
We use the notations of \ref{theorem:1}.\\
 Let $E=\frac{\mathbb{C}}{L}$ with $L=\mathbb{Z}+\mathbb{Z}\tau\subset\mathbb{C}$ lattice and $C=\langle \bar{g} \rangle \leq E=\frac{\mathbb{C}}{L}$
 subgroup cyclic of order $n$, consequently $C=\frac{L+\mathbb{Z}g}{L}$.
 We have that $\frac{E}{C}\simeq\frac{\frac{\mathbb{C}}{L}}{\frac{L+\mathbb{Z}g}{L}}\simeq\frac{\mathbb{C}}{L+\mathbb{Z}g}=\frac{\mathbb{C}}{L'}$, where we denoted by $L'=L+\mathbb{Z}g=\mathbb{Z}+\mathbb{Z}\tau+\mathbb{Z}g$.
Recall that $\frac{\mathbb{C}}{L}\simeq\frac{\mathbb{C}}{L'}\Leftrightarrow\exists \lambda\in\mathbb{C}$ such that $\lambda L=L'$.\\

Let $u$ be defined in the following way:
$E=\frac{\mathbb{C}}{L}\longrightarrow\frac{E}{C}=\frac{\mathbb{C}}{L'}=\frac{\mathbb{C}}{L+\mathbb{Z}g}$
isomorphism, $u(\bar{z})=\widehat{\lambda z}$.
 Then $u(C)=u\big(\frac{L+\mathbb{Z}g}{L}\big)=\lambda\cdot\frac{L+\mathbb{Z}g}{L'}=\frac{\lambda L+\mathbb{Z}\lambda g}{L'}$.
Since $\lambda L=L'$ we have that $u(C)=\frac{L'+
\mathbb{Z}\lambda g}{L'}=\langle \widehat{\lambda g},+ \rangle$.
But $E[n]=\frac{\frac{1}{n} L}{L}$ and consequently
$\frac{E[n]}{C}=\frac{\frac{\frac{1}{n}
L}{L}}{C}=\frac{\frac{\frac{1}{n}
L}{L}}{\frac{L+\mathbb{Z}g}{L}}=\frac{\frac{1}{n}
L}{L+\mathbb{Z}g}=\frac{\frac{1}{n} L}{L'}$. We obtain that:
\begin{center}
\begin{equation}
 u(C)=\frac{E[n]}{C}\Longleftrightarrow L'+ \mathbb{Z}\lambda g= \frac{1}{n}\cdot L
\end{equation}
\end{center}

The relation (7) is equivalent to $\lambda L+ \mathbb{Z}\lambda g=
\frac{1}{n}\cdot L\Longleftrightarrow
\lambda(L+\mathbb{Z}g)=\frac{1}{n}\cdot L\Longleftrightarrow
\lambda L'=\frac{1}{n}\cdot L\Longleftrightarrow \lambda^{2}
L=\frac{1}{n}\cdot L\Longleftrightarrow n \lambda^{2} L= L.$
 Moreover,
 \begin{center}
 \begin{equation}
  L=n\lambda^{2} L\Longleftrightarrow\exists M_{1}\in SL_{2}(\mathbb{Z})\quad \text{such that}
 \left(
\begin{array}{ccc}
n\lambda^{2} \\
n\lambda^{2}\tau \\
\end{array}\right)=M_{1}\cdot
\left(
\begin{array}{ccc}
1 \\
\tau \\
\end{array} \right)
\end{equation}
\end{center}

\begin{center}
\begin{equation*}
\text{We denote by}\quad M_{1}=\left(
\begin{array}{ccc}
a_{1} & A_{1}  \\
b_{1}& B_{1} \\
\end{array} \right)
 \text{and by using the relations (3):}
 \begin{cases}
 \lambda=\frac{a+b\tau}{n}\\
 \lambda\tau=\frac{A+B\tau}{n}\\
 \end{cases}\text{ we get:}
 \end{equation*}
 \begin{equation*}
 \left(
\begin{array}{ccc}
n\lambda^{2} \\
n\lambda^{2}\tau \\
\end{array} \right)=
\left(
\begin{array}{ccc}
a_{1} & A_{1}  \\
b_{1}& B_{1} \\
\end{array} \right)
\cdot \left(
\begin{array}{ccc}
1 \\
\tau \\
\end{array} \right)\Longleftrightarrow
\end{equation*}
\begin{equation*}
\begin{cases}
n\lambda^{2}=a_{1}+A_{1}\tau \\
n\lambda^{2}\tau=b_{1}+B_{1}\tau \\
\end{cases}\Longleftrightarrow
\begin{cases}
a_{1}+A_{1}\tau=\frac{(a+b\tau)^{2}}{n} \\
b_{1}+B_{1}\tau=\lambda\cdot(A+B\tau) \\
\end{cases}\Longleftrightarrow
\begin{cases}
a_{1}+A_{1}\tau=\frac{(a+b\tau)^{2}}{n} \\
b_{1}+B_{1}\tau=\frac{(a+b\tau)(A+B\tau)}{n} \\
\end{cases}
\end{equation*}
\end{center}

Note that $A+B\tau=\tau\cdot(a+b\tau)$, which implies that
$b\tau^{2}=(B-a)\tau+A$. By using this equation we obtain:
\begin{center}
$a_{1}+A_{1}\tau=\frac{(a+b\tau)^{2}}{n}\Longleftrightarrow
na_{1}+nA_{1}\tau=a^{2}+b^{2}\tau^{2}+2ab\tau=
a^{2}+2ab\tau+b[(B-a)\tau+A]=(a^{2}+bA)+b(a+B)\tau\Longleftrightarrow$
\begin{equation*}
\begin{cases}
a_{1}=\frac{a^{2}+b A}{n}\\
A_{1}=\frac{b(a+B)}{n}\\
\end{cases}
\end{equation*}
\end{center}

\begin{center}
$b_{1}+B_{1}\tau=\frac{(a+b\tau)(A+B\tau)}{n}=\frac{\tau(a+b\tau)^{2}}{n}\Longleftrightarrow
nb_{1}+nB_{1}\tau=\tau[(a^{2}+b A)+b(a+B)\tau]=(a^{2}+b
A)\tau+(a+B)((B-a)\tau+A)=(a+B)A+(B^{2}+b
A)\tau\Longleftrightarrow$
\begin{equation*}
\begin{cases}
b_{1}=\frac{(a+B)A}{n}\\
B_{1}=\frac{B^{2}+b A}{n}\\
\end{cases}
\end{equation*}
\end{center}
We have that:
\begin{center}
 \begin{equation*}
M^{2}=\left(
\begin{array}{ccc}
a & A  \\
b & B \\
\end{array} \right)\cdot
\left(
\begin{array}{ccc}
a & A  \\
b & B \\
\end{array} \right)=\left(
\begin{array}{ccc}
a^{2}+bA & A(a+B)  \\
b(a+B) & B^{2}+bA \\
\end{array} \right)=\left(
\begin{array}{ccc}
na_{1} & nb_{1}  \\
nA_{1} & nB_{1}\\
\end{array} \right).
\end{equation*}
\end{center}

It is easy to see that the relation (8) is equivalent to $M^{2}\equiv O_{2}({\rm mod} n)$:\\
$"\Longrightarrow"$ We proved above that $M^{2}=n\cdot M_{1}^{t}\equiv O_{2}({\rm mod} n)$.\\
$"\Longleftarrow"$ Since $M^{2}\equiv O_{2}({\rm mod} n)$ we
obtain that there exist $a_{1},b_{1},A_{1},B_{1}\in\mathbb{Z}$
such that
\begin{center}
 \begin{equation*}
M^{2}=\left(
\begin{array}{ccc}
na_{1} & nb_{1}  \\
nA_{1} & nB_{1}\\
\end{array} \right)=n\cdot M_{1}^{t},\quad \text{where we denote by}\quad M_{1}=
\left(
\begin{array}{ccc}
a_{1} & A_{1}  \\
b_{1}& B_{1} \\
\end{array} \right).
\end{equation*}
\end{center}
Since $M^{2}=n\cdot M_{1}^{t}$ and  ${\rm det} (M)=n$ we obtain
that $n^{2}=n^{2}\cdot {\rm det} (M_{1})$, hence ${\rm det}
(M_{1})=1$, i.e.
$M_{1}\in SL_{2}(\mathbb{Z})$. From the previous proof we obtain that the relation (8) is satisfied, q.e.d.\\

Moreover, from Hamilton-Cayley's theorem we know that
\begin{equation*}
M^{2}-(Tr (M))\cdot M+\text{det}(M)I_{2}=0_{2}.
\end{equation*}
Since ${\rm det} (M)=n$, $M\in SL_{2}(\mathbb{Z})$ and ${\rm
gcd}(a,A,b,B)=1$ it follows easily that
\begin{equation*}
M^{2}\equiv O_{2}\text{(mod} n)\Longleftrightarrow n|\text{Tr}(M)
\end{equation*}
which completes the proof.\\

\end{proof}

\begin{proposition}\label{proposition:0}

   Let $E$ be a complex elliptic curve satisfying the condition  of Theorem \ref{theorem:1} i),
  $\tau^{2}=u\tau+v,u,v\in\mathbb{Q}, \Delta=u^{2}+4v<0$
  (i.e. $E$ admits complex multiplication).\\
  \indent $\exists C\leq(E,+)$ cyclic subgroup of order $n$ of $E$ such that $(E,C)\sim(\frac{E}{C},\frac{E[n]}{C})$
  only when $n\in\{1,2,3,\frac{-u_{2}^{2}v_{2}^{2}\Delta}{4d^{2}},\frac{-u_{2}^{2}v_{2}^{2}\Delta}{d^{2}}\}$
  and moreover, this is happening only when the following conditions are
  satisfied: \\
  a) The case $n=\frac{-u_{2}^{2}v_{2}^{2}\Delta}{4d^{2}}$ occurs $\Longleftrightarrow 2d|u_{1}v_{2}$.\\
 This case is realized for $b'=\pm 1$ and  $a=-\frac{u_{1}v_{2}}{2d}b'$.\\
  b) The case $n=\frac{-u_{2}^{2}v_{2}^{2}\Delta}{d^{2}}$ occurs $\Longleftrightarrow 2d \not\vert u_{1}v_{2}$.\\
  This case is realized for $b'=\pm 2$ and $a=-\frac{u_{1}v_{2}}{2d}b'$.\\
  c) The case $n=2$ occurs $\Longleftrightarrow\frac{-u_{2}^{2}v_{2}^{2}\Delta}{4d^{2}}=1$ and $2d|u_{1}v_{2}$.\\
  This case is realized for $b'=\pm 1$ and $a=\pm 1 -\frac{u_{1}v_{2}}{2d}b'$.\\
  d) The case $n=3$ occurs $\Longleftrightarrow\frac{-u_{2}^{2}v_{2}^{2}\Delta}{d^{2}}=3$ and $2d \not\vert u_{1}v_{2}$.\\
  This case is realized for $b'=\pm 1$ and $a=\pm\frac{3}{2}-\frac{u_{1}v_{2}}{2d}b'$.\\
\end{proposition}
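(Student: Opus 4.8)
The strategy is to exploit the explicit formula for $\det(M)$ obtained in the proof of Theorem~\ref{theorem:1}, namely
\begin{equation*}
n=\det(M)=\Big(a+\tfrac{u_{1}v_{2}}{2d}b'\Big)^{2}-\tfrac{u_{2}^{2}v_{2}^{2}\Delta}{4d^{2}}b'^{2},
\end{equation*}
together with the extra arithmetic constraint $n\mid\operatorname{Tr}(M)$ supplied by Theorem~\ref{theorem:2} (for $n>1$; the case $n=1$ being trivial since $C=\{O\}$ works). So first I would write $\operatorname{Tr}(M)=a+B=2a+\tfrac{u_{1}v_{2}}{d}b'=2\big(a+\tfrac{u_{1}v_{2}}{2d}b'\big)$. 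Setting $s:=a+\tfrac{u_{1}v_{2}}{2d}b'$ (a half-integer a priori, but $2s\in\mathbb{Z}$) and $D:=\tfrac{-u_{2}^{2}v_{2}^{2}\Delta}{4d^{2}}>0$, the two conditions become $n=s^{2}+Db'^{2}$ and $n\mid 2s$, with $\gcd(a,b')=1$. The plan is: from $n\mid 2s$ deduce $n\le |2s|$ unless $s=0$; combine with $n=s^{2}+Db'^{2}\ge s^{2}$ to force $|s|\le 2$ (since $s^{2}\le n\le |2s|$ gives $|s|\le 2$), then run through the finitely many cases $s\in\{0,\pm 1,\pm 2\}$ (recalling $2s\in\mathbb Z$, so also $s=\pm 1/2,\pm 3/2$ are possible when $2d\nmid u_1v_2$).

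Next I would analyze each case. If $s=0$ then $n=Db'^{2}$; the divisibility $n\mid 2s=0$ is automatic, and $\gcd(a,b')=1$ with $a=-\tfrac{u_1v_2}{2d}b'$ forces $b'=\pm 1$ when $2d\mid u_1v_2$ (giving $n=D$, case~a) or $b'=\pm 2$ when $2d\nmid u_1v_2$ — because then $a=-\tfrac{u_1v_2}{d}\cdot\tfrac{b'}{2}$ needs $2\mid b'$ and $\gcd(a,b')=1$ forces $b'=\pm 2$ (giving $n=4D$, case~b); note $2d\nmid u_1v_2$ together with $2\mid b'$ keeps $a$ an integer since $\tfrac{u_1v_2}{d}b'\in 2\mathbb Z$. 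If $s\ne 0$, write $2s=n t$ with $t\ne 0$ an integer, so $s^2\le n\le |n t|$ gives $|t|=1$, hence $n=2|s|$, i.e. $4s^2=n^2=(s^2+Db'^2)n$; substituting $n=2|s|$ gives $2|s|=s^2+Db'^2$. For integer $s=\pm 1$ this forces $Db'^{2}=1$, so $D=1,b'=\pm 1$, $n=2$ (case~c); for $s=\pm 2$ it forces $Db'^{2}=0$, impossible. For half-integral $s=\pm 1/2$ (only when $2d\nmid u_1v_2$), $n=2|s|=1$, already handled; for $s=\pm 3/2$, $n=3$ and $2\cdot\tfrac32=\tfrac94+Db'^2$ has no solution — so instead one must redo the $s\ne0$ step allowing $2s$ to be any integer: $2s=nt$, $s^2\le n\le|nt|$ still gives $|t|=1$ only if $|2s|\ge s^2$, i.e. $|s|\le 2$; but $s=3/2$ gives $|2s|=3\ge 9/4$, consistent, with $n=3$, and then $n=s^2+Db'^2$ reads $3=\tfrac94+Db'^2$, i.e. $Db'^2=\tfrac34$, so $4D b'^2=3$, forcing $b'=\pm1$, $4D=3$, i.e. $D=3/4$, equivalently $\tfrac{-u_2^2v_2^2\Delta}{d^2}=3$ (case~d). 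Back-translating $s=a+\tfrac{u_1v_2}{2d}b'$ into $a$ in each case gives exactly the stated values of $a$, and one checks $\gcd(a,b')=1$ holds (it is forced by primitivity, using $\gcd(u_1,u_2)=\gcd(v_1,v_2)=1$ as in Theorem~\ref{theorem:1}).

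The main obstacle is bookkeeping the half-integer versus integer dichotomy for $s$ cleanly: the quantity $\tfrac{u_1v_2}{2d}$ need not be an integer, so $a$ being an integer couples the parity of $b'$ to whether $2d\mid u_1v_2$, and this is precisely what separates cases (a),(c) from (b),(d). I would handle this by treating $2d\mid u_1v_2$ and $2d\nmid u_1v_2$ as the two top-level branches from the outset, within each branch parametrizing $a$ so that integrality is transparent, and only then imposing $n=s^2+Db'^2$ and $n\mid 2s$. One should also verify the converse direction — that in each listed case the constructed $(a,b')$ genuinely yields $\det(M)=n$ with $n\mid\operatorname{Tr}(M)$ (equivalently $M^2\equiv O_2\ (\mathrm{mod}\ n)$), hence by Theorem~\ref{theorem:2} a bona fide subgroup $C$ with $(E,C)\sim(E/C,E[n]/C)$; this is a direct substitution into the $\det(M)$ and $\operatorname{Tr}(M)$ formulas and requires no new idea.
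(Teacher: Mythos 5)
Your proposal follows essentially the same route as the paper: setting $x=a+\frac{u_{1}v_{2}}{2d}b'$, combining $n=\det M=x^{2}-\frac{u_{2}^{2}v_{2}^{2}\Delta}{4d^{2}}b'^{2}$ with the divisibility $n\mid \mathrm{Tr}(M)=2x$ from Theorem \ref{theorem:2} to force $|x|\le 2$ with $2x\in\mathbb{Z}$, and then enumerating the half-integer values of $x$ exactly as the paper does. The one detail you elide that the paper must (and does) spell out is the exclusion, in the $n=2$ case, of the alternative $b'=\pm 2$ with $\frac{-u_{2}^{2}v_{2}^{2}\Delta}{d^{2}}=1$ (your ``$Db'^{2}=1$ so $D=1$, $b'=\pm1$'' is not automatic, since $4D$ rather than $D$ is the integer here); this is ruled out by the congruence argument that $u_{1}^{2}(v'_{2})^{2}+4v_{1}v_{2}(u'_{2})^{2}=-1$ is impossible modulo $4$.
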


\begin{remark}
  For each complex elliptic curve satisfying the condition  of Theorem \ref{theorem:1} i) there exists a unique  $n\geq 2$
  for which $\exists C\leq(E,+)$ cyclic subgroup of order $n$ such
  that $(E,C)\sim(\frac{E}{C},\frac{E[n]}{C})$. In other words, if
  for a given $n$ there exist cyclic subgroups of order $n$ of $E$ such
  that $(E,C)\sim(\frac{E}{C},\frac{E[n]}{C})$ then for a
  different integer $m$ there are no cyclic subgroups of the same $E$
  such that $(E,C)\sim(\frac{E}{C},\frac{E[m]}{C})$.
  \end{remark}

\begin{proof}
From the proof of Theorem \ref{theorem:1} we have $(a,A,b,B)=\Big(a,\frac{u_{2}v_{1}}{d}b',\frac{u_{2}v_{2}}{d}b', a+\frac{u_{1}v_{2}}{d}b'\Big)$\\
and $n=\Big(a+\frac{u_{1}v_{2}}{2d}b'\Big)^{2}-\frac{u_{2}^{2}v_{2}^{2}\Delta}{4d^{2}}b'^{2}$.\\
 From Theorem \ref{theorem:2} we have  that there exist  $C\leq$ (E,+) cyclic subgroup of order $n$ of $E$ such that
  $(E,C)\sim(\frac{E}{C},\frac{E[n]}{C})$ if and only if $n={\rm det} M$ and $n|{\rm Tr}(M)=a+B$.\\
 The relation $n|{\rm Tr}(M)=a+B$ is equivalent to
\begin{equation}
 n=\Big(a+\frac{u_{1}v_{2}}{2d}b'\Big)^{2}-\frac{u_{2}^{2}v_{2}^{2}\Delta}{4d^{2}}b'^{2}\quad\vert\quad2\Big(a+\frac{u_{1}v_{2}}{2d}b'\Big)
\end{equation}\\
 By denoting $x=a+\frac{u_{1}v_{2}}{2d}b'$, (9) becomes
 \begin{equation}
 x^{2}-\frac{u_{2}^{2}v_{2}^{2}\Delta}{4d^{2}}b'^{2}\quad|\quad2x
 \end{equation}
 If $x>2$ then $x^{2}>2x$ and consequently the equation (10) is impossible.\\
 If $x<-2$ then $x^{2}>-2x$ and consequently the equation (10) is impossible.\\
We obtain that $x\in[-2,2]$, $x=\frac{m}{2}, m\in\mathbb{Z}$.\\
 \indent If $x=2$, by using the fact that $\Delta<0$, (10) becomes $4-\frac{u_{2}^{2}v_{2}^{2}\Delta}{4d^{2}}b'^{2}\quad|\quad4$, i.e. $b'=0$.
  Consequently $x=a=2$, $n=4$ and $(a, b')=(2,0)$. But ${\rm gcd} (a,b')=1$, contradiction.\\
  If $x=-2$ we obtain similarly that $b'=0$ and $x=a=-2$, hence ${\rm gcd} (a,b')=2$, contradiction.\\
If $x=0$ we have that $a=-\frac{u_{1}v_{2}}{2d}b'$. We distinguish two cases:\\
  \indent I If $2d|u_{1}v_{2}$ it follows that $b'|a$. Since ${\rm gcd} (a,b')=1$ it follows that $b'=\pm1$.\\
 Since $b'=\pm 1$ we have that $n=x^{2}-\frac{u_{2}^{2}v_{2}^{2}\Delta}{4d^{2}}b'^{2}=-\frac{u_{2}^{2}v_{2}^{2}\Delta}{4d^{2}}$.\\
    This case is realized for $b'=\pm 1$ and $a=-\frac{u_{1}v_{2}}{2d}b'$.\\
  \indent II If $2d\nmid u_{1}v_{2}$, by using the fact that $a=-\frac{u_{1}v_{2}}{2d}b'\in\mathbb{Z}$, we have
   $2|b'\Longrightarrow \exists b"\in\mathbb{Z}$ such that $b'=2b"$.
   Consequently $a=-\frac{u_{1}v_{2}}{d}b"$ and since ${\rm gcd}(a,b")=1$ we get that $b"=\pm1$ hence $b'=\pm 2$ .\\
   From $b'=\pm 2$ we obtain that $n=x^{2}-\frac{u_{2}^{2}v_{2}^{2}\Delta}{4d^{2}}b'^{2}=-\frac{u_{2}^{2}v_{2}^{2}\Delta}{d^{2}}$.\\
   This case is realized for $b'=\pm 2$ and $a=-\frac{u_{1}v_{2}}{2d}b'$.\\
     If $x=\pm1$ then $n|2x=a+B=\pm 2$, consequently $n\in\{1,2\}$.\\
   \indent If $n=1$ then $C$ is  trivial subgroup.\\
   \indent If $n=2$ then $n=x^{2}-\frac{u_{2}^{2}v_{2}^{2}\Delta}{4d^{2}}b'^{2}=2\Longleftrightarrow -\frac{u_{2}^{2}v_{2}^{2}\Delta}{4d^{2}}b'^{2}=1\Longleftrightarrow \frac{u_{1}^{2}v_{2}^{2}
   + 4v_{1}v_{2}u_{2}^{2}}{4d^{2}}b'^{2}= -1.$  $(*)$\\
   \indent If $2|b'$ then we have the equation $\Big(\frac{b'}{2}\Big)^{2}\cdot\Big\lbrack u_{1}^{2}\Big(\frac{v_{2}}{d}\Big)^{2}
   + v_{1}v_{2}\Big(\frac{u_{2}}{d}\Big)^{2}\Big\rbrack=-1$, hence $\frac{b'}{2}=\pm 1$, i.e. $b'=\pm 2$.\\
   The equation (*) becomes $u_{1}^{2}v_{2}^{2} + 4v_{1}v_{2}u_{2}^{2}=-d^{2}$ and, by using the fact that $u_{2}=du'_{2}$
   and $v_2=d\cdot (v'_{2})$ we obtain that $u_{1}^{2}(v'_{2})^{2} + 4v_{1}v_{2}(u'_{2})^{2}=-1$.
   But a perfect square is congruent to either $0({\rm mod} 4)$ or $1({\rm mod} 4)$,
   hence from the previous equation $-1\equiv 0({\rm mod} 4)$ or $-1\equiv 1({\rm mod} 4)$ respectively, contradiction.\\
   It remains that $2\not|b'$. Since $a=\pm1-\frac{u_{1}v_{2}}{2d}b'\in\mathbb{Z}$ we get $2d|u_{1}v_{2}$. The equation
   $(*)$
   becomes
   $n=b'^{2}\cdot\Big\lbrack\Big(\frac{u_{1}v_{2}}{2d}\Big)^{2}+
   + v_{1}v_{2}\Big(\frac{u_{2}}{d}\Big)^{2}\Big\rbrack=-1$, consequently $b'^{2}=1$.
   In conclusion $n=2$ implies $2d|u_{1}v_{2}$ and $\frac{u_{2}^{2}v_{2}^{2}\Delta}{4d^{2}}=-1$.\\
   This case is realized for $b'=\pm 1$ and $a=\pm1-\frac{u_{1}v_{2}}{2d}b'$.\\
 If $x=\pm \frac{3}{2}$ we have that $n|2x=a+B=\pm 3$, hence $n\in\{1,3\}$.\\
    \indent If $n=1$ then $C$ is  trivial subgroup.\\
    If $n=3$  we have that  $n=x^{2}-\frac{u_{2}^{2}v_{2}^{2}\Delta}{4d^{2}}b'^{2}=\frac{9}{4}-\frac{u_{2}^{2}v_{2}^{2}\Delta}{4d^{2}}b'^{2}|2x=\pm 3$.
    The divisibility occurs if and only if $\frac{u_{2}^{2}v_{2}^{2}\Delta}{4d^{2}}b'^{2}=-\frac{3}{4}\Longleftrightarrow b'^{2}\cdot\frac{u_{1}^{2}v_{2}^{2}
   + 4v_{1}v_{2}u_{2}^{2}}{d^{2}}=-3$. It follows that either $b'^{2}=1$ or $b'^{2}=3$. The case  $b'^{2}=3$ is impossible.
   Consequently, it remains that $b'^{2}=1$ and
    $\frac{u_{2}^{2}v_{2}^{2}\Delta}{4d^{2}}=-3$.\\
   Since $x=\pm \frac{3}{2}$ we obtain that $a=\pm \frac{3}{2}-\frac{u_{1}v_{2}}{2d}b'\in\mathbb{Z}$, consequently $2d\nmid u_{1}v_{2}.$
    This case is realized for $b'=\pm 1$ and $a=\pm\frac{3}{2}-\frac{u_{1}v_{2}}{2d}b'$.\\
   If $x=\pm \frac{1}{2}$ then $n|2x=a+B=\pm 1$, consequently $C$ is trivial subgroup.\\

    Vice-versa, if $2d| u_{1}v_{2}$ or $2d\nmid u_{1}v_{2}$ one can easily check that there
    exists $n$ with the properties specified in the Proposition.

    The fact that such an $n>1$ is unique is also easy to check:\\
    If $2d| u_{1}v_{2}$ and $\frac{-u_{2}^{2}v_{2}^{2}\Delta}{4d^{2}}>1$, there exists $C$ cyclic of order $n=\frac{-u_{2}^{2}v_{2}^{2}\Delta}{4d^{2}}$.\\
    If $2d| u_{1}v_{2}$ and $\frac{-u_{2}^{2}v_{2}^{2}\Delta}{4d^{2}}=1$, there exists $C$ cyclic of order 2.\\
    If $2d\nmid u_{1}v_{2}$ and $n=\frac{-u_{2}^{2}v_{2}^{2}\Delta}{d^{2}}>1$, there exists $C$ cyclic of order $n=\frac{-u_{2}^{2}v_{2}^{2}\Delta}{d^{2}}$.\\
    If $2d\nmid u_{1}v_{2}$ and $n=\frac{-u_{2}^{2}v_{2}^{2}\Delta}{d^{2}}=1$ we have that $\frac{u_{1}^{2}v_{2}^{2}
   + 4v_{1}v_{2}u_{2}^{2}}{d^{2}}=-1$. Since $d|u_{2}$ and $d|v_{2}$ it follows that there exist $u'_{2},v'_{2}\in\mathbb{Z}$ such that
    $u_{2}=du'_{2}$ and $v_{2}=dv'_{2}$. The equality $\frac{u_{1}^{2}v_{2}^{2}
   + 4v_{1}v_{2}u_{2}^{2}}{d^{2}}=-1$ means  $u_{1}^{2}(v'_{2})^{2}
   + 4v_{1}v_{2}(u'_{2})^{2}=-1$, i.e. $(u_{1}(v'_{2}))^{2}\equiv -1({\rm mod} 4)$, contradiction. Consequently, this case is not possible.
\end{proof}

\section{Examples. The cases $n=2$, $n=3$ and $n=5$}

In this section we classify (up to an isomorphism) the elliptic
curves $E$ which admit a subgroup $C\leq (E,+)$ of order either
$2$ or $3$  or $5$ such that
 $\frac{E}{C}\simeq E$.
  We recall that complex elliptic curves are of the form
  $\frac{\mathbb{C}}{L}$ for some
$L=\mathbb{Z}+\mathbb{Z}\tau\subset\mathbb{C}$ where $\tau\in G=\Big\lbrace z=x+iy: -\frac{1}{2}\leq x<\frac{1}{2},|z|\geq 1 \Big\rbrace.$\\
  Let $E$ be an elliptic curve satisfying the condition of Theorem \ref{theorem:1},i).\\
  \indent $E$ is isomorphic to an elliptic curve $E'=\frac{\mathbb{C}}{L}$, where $L=\mathbb{Z}+\mathbb{Z}\tau$ and $\tau\in G$.
  Since an isomorphism $u: E\longrightarrow E'$ is of the type $u(z)=A\cdot z, A\in SL_{2}(\mathbb{Z})$ one easily obtains that
   $E'$ satisfies the condition of Theorem \ref{theorem:1},i).
   Hence we can assume (up to an isomorphism) that $E$ is of the form $\frac{\mathbb{C}}{L}$ with $L=\mathbb{Z}+\mathbb{Z}\tau\subset\mathbb{C}$
  and $\tau\in G$.
  Moreover, we observe that if $\tau=x+iy\in G$, from $|x|\leq\frac{1}{2}$ and $|z|=x^{2}+y^{2}\geq 1$ it follows that $y^{2}\geq \frac{3}{4}$.\\
   Let $\tau^{2}-u\tau-v=0,u,v\in\mathbb{Q}, \Delta=u^{2}+4v<0$ and $\tau\in G$. Then $\tau=\frac{u\pm\ i \sqrt{|\Delta|}}{2}$ and,  since $\tau\in G$,
   we get that  $-1\leq u<1$ and $|\Delta|\geq 3$.\\
   Since $\Delta=u^{2}+4v<0$ we have that $v<0$. Without loss of generality, we can assume that $v_{2}>0,v_{1}<0$ and $u_{2}>0$.
By using  Theorem \ref{theorem:1} and these restrictions imposed
to $\tau$ we obtain the following results:

  \begin{proposition}\label{proposition:1}
  There are exactly $3$ elliptic curves $E$ (up to an isomorphism) which admit at least one subgroup $C\leq(E,+)$ of order $2$ such that $\frac{E}{C}\simeq E$.\\
  If we denote by  $L=\mathbb{Z}+\mathbb{Z}\tau$, they are:\\
  a) $E=\frac{\mathbb{C}}{L},\tau^{2}=-1$;\\
  b) $E=\frac{\mathbb{C}}{L},\tau^{2}=-2$;\\
  c) $E=\frac{\mathbb{C}}{L},\tau^{2}=-\tau-2$.\\
  \end{proposition}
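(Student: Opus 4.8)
The plan is to feed $n=2$ into Theorem \ref{theorem:1} and then solve a small Diophantine equation, using the normalizations fixed at the start of this section: $\tau\in G$, so $-1\le u<1$ and $|\Delta|\ge 3$, together with $v<0$, $v_2>0$, $v_1<0$, $u_2>0$. Recall from the computation in the proof of Theorem \ref{theorem:1} that, with $d=d_2=\gcd(u_2,v_2)$,
\[
\det M=\Big(a+\tfrac{u_1v_2}{2d}\,b'\Big)^2-\tfrac{u_2^2v_2^2\Delta}{4d^2}\,b'^2,
\]
and that $m:=2a+u_1\,\tfrac{v_2}{d}\,b'=2\big(a+\tfrac{u_1v_2}{2d}b'\big)$ is an integer. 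Set $K:=-\,\tfrac{u_2^2v_2^2\Delta}{d^2}$; clearing the denominator of $\Delta$ and using $d\mid u_2$, $d\mid v_2$ shows that $K$ is a positive integer, and $\tfrac{u_2^2v_2^2}{d^2}=\big(\tfrac{u_2v_2}{d}\big)^2$, so with $N:=\tfrac{u_2v_2}{d}\in\mathbb{Z}_{\ge 1}$ we have $K=N^2|\Delta|\ge|\Delta|\ge 3$. Multiplying the displayed formula by $4$, the criterion of Theorem \ref{theorem:1}(ii) for $n=2$ becomes: there exists $(a,b')\in\mathbb{Z}^2$ with $\gcd(a,b')=1$ and
\[
m^2+K\,b'^2=8.
\]

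First I would dispose of the degenerate subcase: if $b'=0$ then $M=\mathrm{diag}(a,a)$ and $\det M=a^2=2$ is impossible, so $b'^2\ge 1$, whence $K\le 8$. Then $N^2|\Delta|\le 8$ with $|\Delta|\ge 3$ forces $N^2\le 2$, i.e. $N=1$; thus $u_2v_2=\gcd(u_2,v_2)$, which (since $u_2,v_2\ge 1$) gives $u_2=v_2=1$. Consequently $u,v\in\mathbb{Z}$, $d=1$, and $K=|\Delta|=4|v|-u^2$. With $b'^2\ge 1$ and $3\le K\le 8$ the equation $m^2+Kb'^2=8$ forces $b'^2=1$ and $m^2=8-K$, so $8-K$ must be a perfect square, leaving $K\in\{4,7,8\}$. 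Finally, $K=4|v|-u^2$ with $v\in\mathbb{Z}_{<0}$ and $u\in\{-1,0\}$: for $u=0$ we get $K=4|v|$, giving $v=-1$ if $K=4$ and $v=-2$ if $K=8$ (with $K=7$ impossible); for $u=-1$ we get $K=4|v|-1$, giving $v=-2$ if $K=7$ (with $K=4,8$ impossible). Hence the only possibilities are $(u,v)\in\{(0,-1),(0,-2),(-1,-2)\}$, i.e. $\tau^2=-1$, $\tau^2=-2$, $\tau^2=-\tau-2$.

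For the converse I would exhibit an admissible pair $(a,b')$ in each of the three cases. For $\tau^2=-1$ one has $(a,A,b,B)=(a,-b',b',a)$ and $\det M=a^2+b'^2$, which equals $2$ at $(a,b')=(1,1)$; for $\tau^2=-2$, $(a,A,b,B)=(a,-2b',b',a)$ and $\det M=a^2+2b'^2=2$ at $(a,b')=(0,1)$; for $\tau^2=-\tau-2$, $(a,A,b,B)=(a,-2b',b',a-b')$ and $\det M=a^2-ab'+2b'^2=2$ at $(a,b')=(1,1)$. In each case $\gcd(a,b')=1$, so Theorem \ref{theorem:1} provides a cyclic $C\le(E,+)$ of order $2$ with $E/C\simeq E$.

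It remains to note that the three curves are pairwise non-isomorphic: $i$, $i\sqrt{2}$, $\tfrac{-1+i\sqrt{7}}{2}$ are distinct points of $G$, equivalently the curves have pairwise distinct $j$-invariants (they admit CM by orders of discriminant $-4$, $-8$, $-7$ respectively). Hence there are exactly three such curves, as claimed. I expect the only step requiring genuine attention to be the passage to $m^2+Kb'^2=8$ together with the inequality $N^2|\Delta|\le 8$ that forces $u_2=v_2=1$ — that is, forces $\tau$ to be an algebraic integer with the stated minimal polynomial; once that is in hand, everything reduces to the finite verification above.
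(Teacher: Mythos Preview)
Your proof is correct and follows essentially the same route as the paper: both feed $n=2$ into Theorem~\ref{theorem:1}, use $|\Delta|\ge 3$ to bound $b'$ and force $u_2=v_2=d=1$, and then finish by a short case split on $u_1\in\{-1,0\}$. Your substitution $m=2a+u_1b'$, $K=-u_2^2v_2^2\Delta/d^2$ and the resulting equation $m^2+Kb'^2=8$ is a slightly cleaner bookkeeping device than the paper's direct case analysis, and your explicit check that the three $\tau$'s lie in distinct $\mathrm{SL}_2(\mathbb{Z})$-orbits (hence give non-isomorphic curves) is a point the paper leaves implicit.
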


   \begin{remark}
The Fricke involution $w_2$ of $Y_0(2)$ has 2 fixed points  and
they correspond to the cases a) and b); note that $\nu(2)=2$.
  \end{remark}

\begin{proof}
  From Theorem \ref{theorem:1} we have:
  \begin{equation}
   2=aB-bA=\Big(a+\frac{u_{1}v_{2}}{2d}b'\Big)^{2}-\frac{u_{2}^{2}v_{2}^{2}\Delta}{4d^{2}}b'^{2}
  \end{equation}
   Since $\Delta\leq -3$ we obtain $2=\Big(a+\frac{u_{1}v_{2}}{2d}b'\Big)^{2}-\frac{u_{2}^{2}v_{2}^{2}\Delta}{4d^{2}}b'^{2}\geq\Big(a+\frac{u_{1}v_{2}}{2d}b'\Big)^{2}+ \frac{3u_{2}^{2}v_{2}^{2}}{4d^{2}}b'^{2}\geq\frac{3u_{2}^{2}v_{2}^{2}}{4d^{2}}b'^{2}\geq\frac{3b'^{2}}{4}$, hence $b'^{2}\leq\frac{8}{3}$.
   Since $b'\in\mathbb{Z}$ we get that $b'\in\{0,\pm 1\}$.\\
   If $b'=0$ the equation (10) becomes $2=a^{2}$, absurd.\\
   If $b'=\pm 1$, the equation (10) leads to $2\geq\frac{3}{4}\cdot\Big(\frac{u_{2}v_{2}}{d}\Big)^{2}$.\\
    Denote by $\frac{u_{2}v_{2}}{d}=y\in\mathbb{Z}$. From the above inequality we get that $|y|\leq 1$.\\
    \indent I) If $y=0$ then $u_{2}v_{2}=0$, absurd.\\
    \indent II) If $y=\pm 1$ then $\frac{u_{2}v_{2}}{d}=\pm 1$.\\
    Since $u_{2}>0$ and $v_{2}>0$ we have $\frac{u_{2}v_{2}}{d}=1\Longleftrightarrow u_{2}\cdot\frac{v_{2}}{d}=1\Longleftrightarrow v_{2}\cdot\frac{u_{2}}{d}=1$.
    We obtain that $u_{2}=\frac{v_{2}}{d}=1$ and $v_{2}=\frac{u_{2}}{d}=1$, consequently $u_{2}=v_{2}=d=1$.
     The equation (10) becomes:
     \begin{equation}
   2=\Big(a+\frac{u_{1}}{2}b'\Big)^{2}-\frac{\Delta}{4}
  \end{equation}
  Consequently  $\tau^{2}-u_{1}\tau-v_{1}=0,u_{1},v_{1}\in\mathbb{Z}, \Delta=u_{1}^{2}+4v_{1}<0$. From $\tau\in
  G$ and
   $\tau=\frac{u_{1}\pm\ i \sqrt{|\Delta|}}{2}$ we get that $-1\leq u_{1}<1$. Since $u_{1}\in\mathbb{Z}$ we have that $u_{1}\in\{-1,0\}$.
    We distinguish the following cases:\\
 \indent a)$u_{1}=0$.\\
  The equation (12) becomes $2=a^{2}-\frac{\Delta}{4}=a^{2}-\frac{4v_{1}}{4}=a^{2}-v_{1}$. But $v_{1}<0$ and $v_{1}\in\mathbb{Z}$, consequently $a^{2}=-v_{1}=1$ sau $a^{2}=0, -v_{1}=2$.\\
  If $a^{2}=-v_{1}=1$ it follows that $\tau^{2}=-1$ and $(a,b')=(\pm1,\pm1)$ (hence $a$ and $b'$ are co-primes).\\
  If $a^{2}=0, -v_{1}=2$ we obtain that $\tau^{2}=-2$ and $(a,b')=(0,\pm1)$ (hence $a$ and $b'$ are co-primes).\\
 \indent b)$u_{1}=-1$.\\
  The equation (12) becomes $2=\Big(a-\frac{b'}{2}\Big)^{2}-\frac{\Delta}{4}=\Big(a-\frac{b'}{2}\Big)^{2}-\frac{1+4v_{1}}{4}.$\\
   Since $b'=\pm 1$ we distinguish two cases:\\
 \indent \quad b1) If  $b'=1$ we have that $2=\Big(a-\frac{1}{2}\Big)^{2}-\frac{1+4v_{1}}{4}\Longleftrightarrow 8=(2a-1)^{2}-1-4v_{1}\Longleftrightarrow 9=(2a-1)^{2}-4v_{1}$.\\
   On the other hand $v_{1}\leq -1$ hence the only possibility for the previous equation is $v_{1}=-2$ and $2a-1=\pm 1$.
    From $u_{1}=-1$ and $v_{1}=-2$ it follows that $\tau$ satisfies the equation $\tau^{2}+\tau+2=0$.
    Moreover, if $2a-1=1$ it follows that $(a,b')=(1,1)$ ($a$ and $b'$ are co-primes).\\
    If $2a-1=-1$ it follows that $(a,b')=(0,1)$ ($a$ and $b'$ are co-primes).\\
 \indent \quad b2) If  $b'=-1$ we have that $2=\Big(a+\frac{1}{2}\Big)^{2}-\frac{1+4v_{1}}{4}\Longleftrightarrow 8=(2a+1)^{2}-1-4v_{1}\Longleftrightarrow 9=(2a+1)^{2}-4v_{1}$.\\
   Since $v_{1}\leq -1$ we obtain similarly that $v_{1}=-2$ and $2a+1=\pm 1$. Since $u_{1}=-1$ and $v_{1}=-2$ we obtain that $\tau$
    verifies the same equation $\tau^{2}+\tau+2=0$.
    \end{proof}

    \begin{remark} In particular, given a complex elliptic curve $E$ in the form $\bC/\langle 1,\tau \rangle$ such that $\tau\in G$,
     there are  exactly three values of $\tau$ for which $E$ admits an
    endomorphism of degree $2$: $\tau=i;
    \tau=\sqrt{-2};\tau=\frac{-1+\sqrt{-7}}{2}$, (see \cite{Ha},
    Exercise 4.12), emphasizing in this way the validity of our
    results.
    \end{remark}

 \begin{proposition}\label{proposition:2}
  There are exactly $4$ elliptic curves $E$ (up to an  isomorphism) which admit at least one subgroup $C\leq (E,+)$ of order $3$ such that
   $\frac{E}{C}\simeq E$.
  If we put $L=\mathbb{Z}+\mathbb{Z}\tau$, they are:\\
  a) $E=\frac{\mathbb{C}}{L},\tau^{2}=-2$;\\
  b) $E=\frac{\mathbb{C}}{L},\tau^{2}=-3$;\\
  c) $E=\frac{\mathbb{C}}{L},\tau^{2}=-\tau-1$;\\
  d) $E=\frac{\mathbb{C}}{L},\tau^{2}=-\tau-3$.\\
  \end{proposition}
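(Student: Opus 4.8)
The plan is to run the same argument that proves Proposition~\ref{proposition:1}, now with $n=3$. By Theorem~\ref{theorem:1}, up to isomorphism we may assume $E=\mathbb{C}/\langle 1,\tau\rangle$ with $\tau\in G$, and with the normalisations $u_2,v_2>0$, $v_1<0$, ${\rm gcd}(u_1,u_2)={\rm gcd}(v_1,v_2)=1$, the existence of a cyclic $C\le(E,+)$ of order $3$ with $E/C\simeq E$ is equivalent to the solvability of
\[
3=aB-bA=\Big(a+\frac{u_1v_2}{2d}b'\Big)^2-\frac{u_2^2v_2^2\Delta}{4d^2}\,b'^2,\qquad (a,b')\in\mathbb{Z}^2,\ {\rm gcd}(a,b')=1,
\]
where $\tau\in G$ forces $-1\le u<1$ and $\Delta\le-3$. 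First I would bound $b'$: since $u_2v_2/d={\rm lcm}(u_2,v_2)\ge1$ and $-\Delta\ge3$, the right-hand side is at least $\tfrac34 b'^2$, so $b'^2\le4$, i.e. $b'\in\{0,\pm1,\pm2\}$; the value $b'=0$ gives $3=a^2$, impossible.

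Next I would treat the remaining values of $b'$. For $b'=\pm2$ the same estimate gives $3\ge 3\,{\rm lcm}(u_2,v_2)^2$, so $u_2=v_2=d=1$, and the equation becomes $3=(a\pm u_1)^2-\Delta$; since $-\Delta\ge3$ this forces $\Delta=-3$ and $a=\mp u_1$, and as $u_1\in\{-1,0\}$ with $4v_1=\Delta-u_1^2\in\mathbb{Z}$ we must have $u_1=-1$, $v_1=-1$, i.e. $\tau^2=-\tau-1$, realised by a coprime pair $(a,b')=(\pm1,\pm2)$. For $b'=\pm1$ the estimate only gives ${\rm lcm}(u_2,v_2)\in\{1,2\}$. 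If ${\rm lcm}(u_2,v_2)=1$ then again $u_2=v_2=d=1$, so $\tau^2=u_1\tau+v_1$ with $u_1,v_1\in\mathbb{Z}$, $u_1\in\{-1,0\}$, and the equation reads $3=\big(a+\tfrac{u_1}{2}b'\big)^2-\tfrac{\Delta}{4}$; the case $u_1=0$ is $a^2-v_1=3$ with $v_1<0$, whence $v_1\in\{-2,-3\}$, giving $\tau^2=-2$ and $\tau^2=-3$, while the case $u_1=-1$, after clearing denominators, is $(2a-b')^2-4v_1=13$ with $v_1<0$ and $2a-b'$ odd, whence $(2a-b')^2\in\{1,9\}$, i.e. $v_1\in\{-3,-1\}$, giving $\tau^2=-\tau-3$ and $\tau^2=-\tau-1$; in each case I would exhibit an admissible coprime pair $(a,b')$. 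If ${\rm lcm}(u_2,v_2)=2$ the equation forces $\Delta=-3$ and $a=-\tfrac{u_1v_2}{2d}b'$; running through the three shapes $(u_2,v_2)\in\{(1,2),(2,1),(2,2)\}$ and combining $\Delta=-3$ and $-1\le u<1$ with the coprimality normalisations yields in every case a numerical contradiction (a non-integral $v_1$, a $v_1$ of the wrong parity, or a square forced to be $\equiv 4\pmod 8$), so this branch is empty. Collecting the solutions gives exactly the curves a)--d); conversely, each exhibited pair $(a,b')$ has ${\rm gcd}(a,b')=1$ and produces ${\rm det}\,M=3$, so by Theorem~\ref{theorem:1}~ii) each of these four curves indeed admits a cyclic subgroup $C$ of order $3$ with $E/C\simeq E$.

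The routine core is the same elementary estimate already used for $n=2$. The two places where I expect the actual work to lie are: the bookkeeping of ${\rm gcd}(a,b')=1$ in the $b'=\pm1$, ${\rm lcm}(u_2,v_2)=1$, $u_1=-1$ branch, where several pairs $(a,b')$ arise and each must be checked to be admissible and to point to one of the four listed $\tau$; and the complete elimination of the ${\rm lcm}(u_2,v_2)=2$ branch, which requires using $\Delta=-3$ together with the normalisations ${\rm gcd}(u_1,u_2)={\rm gcd}(v_1,v_2)=1$ to rule out each of the three possible pairs $(u_2,v_2)$. Everything else follows the template of Proposition~\ref{proposition:1}.
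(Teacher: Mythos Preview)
Your approach is the paper's: the same equation, the same bound $b'^2\le 4$ from $-\Delta\ge 3$, and the same case split on $b'$ and then on ${\rm lcm}(u_2,v_2)$. The only differences are in bookkeeping --- you catch the extra solution $(2a-b')^2=9$, $v_1=-1$ in the $b'^2=1$, $u_1=-1$ subcase (the paper records only $v_1=-3$ there, having already found curve~c) via $b'^2=4$), and you invoke the normalisations $\gcd(u_1,u_2)=\gcd(v_1,v_2)=1$ to declare the ${\rm lcm}(u_2,v_2)=2$ branch empty, whereas the paper runs through the three sub-subcases without enforcing those gcd conditions and simply recovers the redundant curve $\tau^2+\tau+1=0$ each time.
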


   \begin{remark}
The Fricke involution $w_3$ of $Y_0(3)$ has 2 fixed points  and
they correspond to the cases b) and c);  note  that $\nu(3)=2$.
  \end{remark}

  \begin{proof}
  By using Theorem \ref{theorem:1},i) we have that:
   \begin{equation}
   3=aB-bA=\Big(a+\frac{u_{1}v_{2}}{2d}b'\Big)^{2}-\frac{u_{2}^{2}v_{2}^{2}\Delta}{4d^{2}}b'^{2}
  \end{equation}
   Since $|\Delta|\geq 3$ we obtain that $3\geq\frac{3u_{2}^{2}v_{2}^{2}}{4d^{2}}b'^{2}\geq\frac{3}{4}b'^{2}$. Consequently $b'^{2}\leq 4$.\\

  \indent I) If  $b'^{2}=4$ both sides of the previous inequality are equal since $3=\frac{3}{4}b'^{2}$, hence $\Delta=-3$ and $\frac{u_{2}v_{2}}{d}=1$.
   Since $u_{2}>0$ and $v_{2}>0$ we get that $u_{2}=v_{2}=d=1$.
  Consequently $\tau$ satisfies the equation $\tau^{2}-u_{1}\tau-v_{1}=0,u_{1},v_{1}\in\mathbb{Z},\Delta=u_{1}^{2}+4v_{1}<0$.
  From $\tau\in G$ and $\tau=\frac{u_{1}\pm\ i \sqrt{|\Delta|}}{2}$ we obtain that $-1\leq u_{1}<1$. Since $u_{1}\in\mathbb{Z}$
  we get that $u_{1}\in\{-1,0\}$. By using the equation $\Delta=-3$ we obtain that $u_{1}^{2}+4v_{1}=-3$, consequently $u_{1}$ is odd.
   It follows that $u_{1}=-1$ and
  $v_{1}=-1$, consequently $\tau$ satisfies $\tau^{2}+\tau+1=0$.

  \indent II) If $b'^{2}=1$ by using the equation (14) we get that $3\geq\frac{3}{4}\cdot\Big(\frac{u_{2}v_{2}}{d}\Big)^{2}$.\\
   We denote by  $\frac{u_{2}v_{2}}{d}=y\in\mathbb{Z}$. Since $u_{2}>0$ and $v_{2}>0$ we have that $y>0$.
   From the above inequality we obtain that $y\in\{1,2\}$.\\
\indent a) If $y=1$ we have that $u_{2}=v_{2}=d=1$.\\
    Consequently $\tau$ satisfies the equation $\tau^{2}-u_{1}\tau-v_{1}=0,u_{1},v_{1}\in\mathbb{Z},\Delta=u_{1}^{2}+4v_{1}<0$.\\
    Since $\tau\in G$ and $\tau=\frac{u_{1}\pm\ i \sqrt{|\Delta|}}{2}$ we get that $-1\leq u_{1}<1$. Since $u_{1}\in\mathbb{Z}$ it follows that $u_{1}\in\{-1,0\}$.
     We distinguish two cases:\\
 \indent \quad a1) If $u_{1}=0$ the equation (14) becomes $3=a^{2}-\frac{1}{4}\Delta=a^{2}-v_{1}$.
     On the other hand $v_{1}<0$ and $v_{1}\in\mathbb{Z}$, consequently either $a^{2}=1,v_{1}=-2$ or $a^{2}=0,v_{1}=-3$.\\
     If $a^{2}=1,v_{1}=-2$ we have that $u_{1}=0$ and $v_{1}=-2$. It follows that $\tau$ satisfies the equation $\tau^{2}=-2$.\\
     If $a^{2}=0,v_{1}=-3$ we have that $u_{1}=0$ and $v_{1}=-3$. It follows that $\tau$ satisfies the equation $\tau^{2}=-3$.\\
  \indent \quad a2) If $u_{1}=-1$ the equation (14) becomes
  $3=\Big(a-\frac{b'}{2}\Big)^{2}-\frac{1}{4}\Delta=\Big(a-\frac{b'}{2}\Big)^{2}-\frac{1}{4}\cdot(1+ 4v_{1})\Longleftrightarrow 13=(2a-b')^{2}-4\cdot v_{1}$.
    From $b'^{2}=1$ we get that $2a-b'$ is odd. Since $v_1\in \bZ$ and $v_1\leq -1$ we obtain that $13\geq (2a-b')^{2}+4$
    and consequently $2a-b'=\pm1$ and $v_1=-3$. It follows that $\tau$ satisfies the equation $\tau^{2}+\tau+3=0$ and
    $(a,b')\in \{(0,\pm1), (0,-1), (-1,-1)\}$ (hence $a$ and $b'$ are co-primes). \\
 \indent b) If $y=2$ we have that $\frac{u_{2}v_{2}}{d}=2$ hence $u_{2}\cdot\frac{v_{2}}{d}=v_{2}\cdot\frac{u_{2}}{d}=2$.\\
     Since $u_{2}>0$ and $v_{2}>0$ we get that $(u_{2},v_{2})\in\{(2,1),(1,2),(2,2)\}$. We distinguish $3$ cases:\\
  \indent \quad b1) If $(u_{2},v_{2})=(2,1)$ the equation (14) becomes $3=\Big(a+\frac{u_{1}b'}{2}\Big)^{2}-\Delta$.\\
    Since $\Delta\leq-3$ we get that $a+\frac{u_{1}b'}{2}=0$ and  $\Delta=-3$. Since $a+\frac{u_{1}b'}{2}=0$ and $b'=\pm 1$
    we obtain that
     $u_{1}$ is even. We have that $\tau^{2}-\frac{u_{1}}{2}\tau-v_{1}=0$, $\tau\in G$ and $-1\leq \frac{u_{1}}{2}<1$. Consequently $-2\leq u_{1}<2$ and,
     since $u_{1}$ is even, it follows that $u_{1}\in\{-2,0\}$.\\
     \indent If $u_{1}=-2$, since $\Delta=-3$ we get that $\frac{u_{1}^{2}}{4}+4v_{1}=-3\Longleftrightarrow v_{1}=-1$.
     In consequence
     $\tau$ satisfies the equation $\tau^{2}+\tau+1=0$.\\
    \indent If $u_{1}=0$, from $\Delta=-3$ we get that $\frac{u_{1}^{2}}{4}+4v_{1}=-3\Longleftrightarrow v_{1}=-\frac{3}{4}$, contradiction.\\
 \indent \quad b2) If $(u_{2},v_{2})=(1,2)$ the equation (14)
     becomes
    $3=(a+u_{1}b')^{2}-\Delta$.\\
     Since $\Delta\leq-3$ we get that $a+u_{1}b'=0$ and $\Delta=-3$.
     We have that $\tau^{2}- u_{1}\tau-\frac{v_{1}}{2}=0$, $\tau\in G$ and $-1\leq u_{1}<1$ hence $u_{1}\in\{-1,0\}$.\\
     \indent If $u_{1}=-1$, from $\Delta=-3$ we get that $u_{1}^{2}+2v_{1}=-3\Longleftrightarrow v_{1}=-2$.
     Consequently
     $\tau$ satisfies the equation $\tau^{2}+\tau+1=0$.\\
     \indent If $u_{1}=0$, from $\Delta=-3$ we get that $u_{1}^{2}+2v_{1}=-3\Longleftrightarrow 2v_{1}=-3$, contradiction.\\
 \indent \quad b3) If $(u_{2},v_{2})=(2,2)$ the equation (14)
     becomes
     $3=\Big(a+\frac{u_{1}b'}{2}\Big)^{2}-\Delta$.\\
    Since $\Delta\leq-3$ we obtain $a+\frac{u_{1}b'}{2}=0$ and $\Delta=-3$. Since $a+\frac{u_{1}b'}{2}=0$ and $b'=\pm 1$ we get that $u_{1}$ is even.
    From $\tau\in G$ and $\tau^{2}- \frac{u_{1}}{2}\tau-\frac{v_{1}}{2}=0$ we obtain that $-2\leq u_{1}<2$. Since $u_{1}$ is even we get that $u_{1}\in\{-2,0\}$.\\
    \indent If $u_{1}=-2$, from $\Delta=-3$ it follows that $\frac{u_{1}^{2}}{4}+2v_{1}=-3\Longleftrightarrow v_{1}=-2$. In
    conclusion
    $\tau$ satisfies the equation $\tau^{2}+\tau+1=0$.\\
    \indent If $u_{1}=0$, from $\Delta=-3$ we obtain that $\frac{u_{1}^{2}}{4}+2v_{1}=-3\Longleftrightarrow v_{1}=-\frac{3}{2}$, contradiction.

     \indent III) If  $b'^{2}=0$ the equation (14) leads to $3=a^{2}$, contradiction.
     \end{proof}

We also obtain the following:

\begin{proposition}\label{proposition:3}
  There are exactly $6$ elliptic curves $E$ (up to an  isomorphism) which admit at least one subgroup $C\leq (E,+)$ of order $5$ such that
   $\frac{E}{C}\simeq E$.
  If we put $L=\mathbb{Z}+\mathbb{Z}\tau$, they are:\\
  a) $E=\frac{\mathbb{C}}{L},\tau^{2}=-1$;\\
  b) $E=\frac{\mathbb{C}}{L},\tau^{2}=-4$;\\
  c) $E=\frac{\mathbb{C}}{L},\tau^{2}=-5$;\\
  d) $E=\frac{\mathbb{C}}{L},\tau^{2}=-\tau-3$;\\
  e) $E=\frac{\mathbb{C}}{L},\tau^{2}=-\tau-5$;\\
  f) $E=\frac{\mathbb{C}}{L},\tau^{2}=-\tau-\frac{3}{2}$.\\
  \end{proposition}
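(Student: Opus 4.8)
The plan is to repeat, for $n=5$, the argument used to prove Propositions \ref{proposition:1} and \ref{proposition:2}. As there, one may first replace $E$ by an isomorphic curve with $\tau\in G$, so that $\tau^{2}=u\tau+v$ with $\Delta=u^{2}+4v<0$, $-1\le u<1$, $|\Delta|\ge 3$, and (without loss of generality) $u_{2}>0$, $v_{2}>0$, $v_{1}<0$. By Theorem \ref{theorem:1}, the existence of a cyclic subgroup $C\le(E,+)$ of order $5$ with $E/C\simeq E$ is equivalent to the existence of coprime integers $a,b'$ with
\begin{equation*}
5=\det M=\Big(a+\tfrac{u_{1}v_{2}}{2d}b'\Big)^{2}-\tfrac{u_{2}^{2}v_{2}^{2}\Delta}{4d^{2}}\,b'^{2}.
\end{equation*}
Since $\Delta\le-3$ and $\tfrac{u_{2}v_{2}}{d}={\rm lcm}(u_{2},v_{2})\ge1$, the right-hand side is $\ge\tfrac{3}{4}b'^{2}$, so $b'^{2}\le\tfrac{20}{3}$ and $b'\in\{0,\pm1,\pm2\}$; the value $b'=0$ gives $5=a^{2}$, which is impossible.

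I would then treat the remaining values of $b'^{2}$. For $b'^{2}=4$ the inequality $5\ge 3\big(\tfrac{u_{2}v_{2}}{d}\big)^{2}$ forces $u_{2}=v_{2}=d=1$, so $\tau^{2}-u_{1}\tau-v_{1}=0$ with $u_{1},v_{1}\in\mathbb{Z}$ and, since $\tau\in G$, $u_{1}\in\{-1,0\}$: for $u_{1}=-1$ the equation reads $(a\mp1)^{2}=6+4v_{1}$, impossible modulo $4$; for $u_{1}=0$ it reads $a^{2}=5+4v_{1}$ with $a$ odd, forcing $v_{1}=-1$, i.e.\ $\tau^{2}=-1$. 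For $b'^{2}=1$ the bound becomes $\tfrac{u_{2}v_{2}}{d}\le 2$. If $\tfrac{u_{2}v_{2}}{d}=1$ then again $u_{2}=v_{2}=d=1$ and $u_{1}\in\{-1,0\}$: for $u_{1}=0$ the equation $a^{2}=5+v_{1}$ gives $v_{1}\in\{-1,-4,-5\}$, hence $\tau^{2}\in\{-1,-4,-5\}$, while for $u_{1}=-1$ the equation $(2a-b')^{2}=21+4v_{1}$ (with $2a-b'$ odd) gives $v_{1}\in\{-3,-5\}$, hence $\tau^{2}=-\tau-3$ or $\tau^{2}=-\tau-5$.

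The remaining case is $b'^{2}=1$ with $\tfrac{u_{2}v_{2}}{d}=2$, i.e.\ $(u_{2},v_{2})\in\{(1,2),(2,1),(2,2)\}$. In each subcase the equation collapses, using $\Delta\le-3$, to $\big(a+\tfrac{u_{1}v_{2}}{2d}b'\big)^{2}=5+\Delta\in\{0,1,2\}$, and then the lowest-terms conditions ${\rm gcd}(u_{1},u_{2})={\rm gcd}(v_{1},v_{2})=1$ together with $\tau\in G$ must be imposed: I expect the shapes $(2,1)$ and $(2,2)$ to force $\Delta=-\tfrac{19}{4}$ together with a non-integral $v_{1}$ and hence to be discarded, while $(1,2)$ survives with $u_{1}=-1$, $v_{1}=-3$, giving $\tau^{2}=-\tau-\tfrac{3}{2}$. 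Collecting everything, the $\tau$-values obtained are exactly $\tau^{2}=-1,-4,-5,-\tau-3,-\tau-5,-\tau-\tfrac{3}{2}$ (the first arising redundantly from several subcases), which are the six pairwise distinct points $i$, $2i$, $i\sqrt{5}$, $\tfrac{-1+i\sqrt{11}}{2}$, $\tfrac{-1+i\sqrt{19}}{2}$, $\tfrac{-1+i\sqrt{5}}{2}$ of $G$, hence six non-isomorphic elliptic curves; for the converse, for each of the six one exhibits the explicit coprime pair $(a,b')$ already produced by the analysis. The step I expect to be most delicate is this last case $\tfrac{u_{2}v_{2}}{d}=2$: one must carefully separate the three admissible shapes of $(u_{2},v_{2})$, keep the coprimality of $u_{1},u_{2}$ and of $v_{1},v_{2}$ in force when simplifying, and check that only $(1,2)$ contributes; a minor bookkeeping point is that $\tau^{2}=-1$ appears from several subcases and must be counted only once.
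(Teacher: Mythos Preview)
Your approach is exactly what the paper intends: its own proof of this proposition is the single sentence that the reasoning of Propositions~\ref{proposition:1} and~\ref{proposition:2} carries over, with the details left as an exercise. Your casework is correct and yields precisely the six curves, with one small correction in the subcase you yourself flag as delicate: for $(u_{2},v_{2})=(2,2)$ the elimination does not go through a non-integral $v_{1}$ as it does for $(2,1)$; instead, with $d=2$, $u_{1}$ odd and $b'=\pm1$, the equation reads $(2a+u_{1}b')^{2}=21+8v_{1}$ with $v_{1}$ odd, and the fundamental-domain bound $|\Delta|\ge 3$ forces $v_{1}\le-3$, so the right-hand side is negative and there is nothing to solve.
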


  \begin{remark}
The Fricke involution $w_5$ of $Y_0(5)$ has 2 fixed points  and
they correspond to the cases c) and f); note that
$\nu(5)=h(-20)=2$.
  \end{remark}

\begin{proof}
We use the same reasoning as in the Propositions
\ref{proposition:1} and \ref{proposition:2} hence the proof is
left to the reader as an exercise.
\end{proof}







\begin{remark}
Given a prime number $p$, there are exactly $p+1$ complex elliptic
curves $E$ (up to an isomorphism) which admit at least one
subgroup $C\leq (E,+)$ of order $p$ such that $\frac{E}{C}\simeq
E$. Note also that given a prime number $p$, there are $p+1$
unramified coverings of degree $p$ of a (complex) elliptic curve.
\end{remark}

\newpage

\[\]

\begin{center}\copyright Bogdan Canepa \& Radu Gaba 2011
\end{center}
\end{document}